\documentclass[a4paper,12pt,reqno]{amsart}

\newcommand\version{September 19, 2025}

\usepackage{amsmath}
\usepackage{amssymb}
\usepackage{amsfonts}
\usepackage{graphicx}
\usepackage{mathtools} 
\usepackage{bbm} 
\usepackage[colorlinks]{hyperref}

\renewcommand\eqref[1]{(\ref{#1})} 

\newcommand{\1}{\mathbbm{1}}
\newcommand{\R}{\mathbb R}

\newcommand{\Hei}{\mathbb H}
\newcommand{\Sph}{\mathbb{S}}

\DeclareMathOperator{\dist}{dist}

\graphicspath{ {images/} }
\setlength{\textwidth}{15.2cm}
\setlength{\textheight}{22.7cm}
\setlength{\topmargin}{0mm}
\setlength{\oddsidemargin}{3mm}
\setlength{\evensidemargin}{3mm}
\setlength{\footskip}{1cm}

\title[Eigenvalue lower bounds]{Eigenvalue lower bounds\\ through a generalized inradius}

\author[R. L. Frank]{Rupert L. Frank}
\address{
	Rupert L. Frank:
	\endgraf
	Mathematisches Institut
	\endgraf
	Ludwig-Maximilians Universit\"at M\"unchen
	\endgraf
	Theresienstr. 39, 80333 M\"unchen, Germany
	\endgraf
	and 
	\endgraf
	Munich Center for Quantum Science and Technology
	\endgraf
	Schellingstr. 4, 80799 M\"unchen, Germany
	\endgraf
	and 
		\endgraf
	Mathematics 253-37, Caltech 
	\endgraf
	Pasadena,
CA 91125, USA
	\endgraf
	{\it E-mail address} {\rm   r.frank@lmu.de}
}

\author[A. Laptev]{Ari Laptev}
\address{
	Ari Laptev:
	\endgraf
	Department of Mathematics
	\endgraf
	Imperial College London
	\endgraf
	London SW7 2BX
	\endgraf
	UK
	\endgraf
	{\it E-mail address} {\rm  a.laptev@imperial.ac.uk}
}

\author[D. Suragan]{Durvudkhan Suragan}
\address{
	Durvudkhan Suragan:
	\endgraf
	Department of Mathematics
	\endgraf
 Nazarbayev University
	\endgraf
	53 Kabanbay Batyr Ave, Astana 010000
	\endgraf
	Kazakhstan
	\endgraf
	{\it E-mail address} {\rm durvudkhan.suragan@nu.edu.kz}}

\subjclass[2010]{35P15, 	58J50.}
\keywords{Robin Laplacian, Polyharmonic operator, Heisenberg group, Lowest eigenvalue, Hardy inequality}

\thanks{Date: \version \\
	The authors were supported in parts by US National Science Foundation grant DMS-1954995 (R.L.F.) and the German Research Foundation grants EXC-2111-390814868 and TRR 352-Project-ID 470903074 (R.L.F.).\\
No new data was collected or generated during the course of this research.}


\newtheoremstyle{theorem}
{10pt}          
{10pt}  
{\sl}  
{\parindent}     
{\bf}  
{. }    
{ }    
{}     
\theoremstyle{theorem}

\numberwithin{equation}{section}
\theoremstyle{plain}
\newtheorem{thm}{Theorem}[section]
\newtheorem{prop}[thm]{Proposition}

\newtheorem{lemma}[thm]{Lemma}
\newtheorem{proposition}[thm]{Proposition}

\theoremstyle{definition}

\newtheorem{rem}[thm]{Remark}

\newtheoremstyle{defi}
{10pt}          
{10pt}  
{\rm}  
{\parindent}     
{\bf}  
{. }    
{ }    
{}     
\theoremstyle{defi}

\newtheorem{remark}[thm]{Remark}
\newtheorem{remarks}[thm]{Remarks}



\begin{document}

\begin{abstract}
	Lieb has shown a lower bound on the smallest Dirichlet eigenvalue of the Laplace operator in terms of a generalized inradius. We derive similar bounds for Robin eigenvalues, for eigenvalues of the polyharmonic operator and the sub-Laplacian on the Heisenberg group. We propose a method based on Hardy inequalities that is different from Lieb's approach.
\end{abstract}

\maketitle
	
\section{Introduction}

\subsection{Aims and scope}

We are interested in lower bounds on the first eigenvalue of Laplace-type operators on domains in terms of simple geometric characteristics of these domains. One such characteristic is the volume of this domain and, indeed, in many situations one can prove such a bound. Instead, here we are interested in a characteristic that is more sensitive to the geometry of the domain than the volume, namely a generalized inradius. 

In the model case of the Dirichlet Laplacian on an open set of Euclidean space such bounds are well known and will be recalled momentarily. In the present paper we prove corresponding bounds for the following operators
\begin{itemize}
	\item the Robin Laplacian with a positive Robin function,
	\item the polyharmonic operators with Dirichlet boundary conditions,
	\item the Heisenberg Laplacian with Dirichlet boundary conditions.
\end{itemize}
We emphasize that these are only a few selected instances where the methods we use are applicable and that many more examples could be added.

\subsection{Reminder on lower bounds for the Dirichlet Laplacian}

To set the scene, we recall some classical lower bounds on the first eigenvalue of the Dirichlet Laplacian on an open set $\Omega\subset\R^d$,
\begin{equation}\label{Laplacian-Dirichlet}
	\begin{cases}
		\begin{aligned}
			& -\Delta u = \lambda^{\rm D}_\Omega u  & & \text{in}\ \Omega \,, \\
			& u =0 & & \text{on}\ \partial\Omega \,.
		\end{aligned}
	\end{cases}
\end{equation}
In his famous book The Theory of Sound (first edition in 1877), Rayleigh claimed that the disk should be the planar domain that minimizes the first (lowest) eigenvalue of the Laplacian with Dirichlet boundary conditions among all domains of the same area. The proof of this conjecture, in fact of its extension to any dimension, came almost 30 years later and is due to G.~Faber and E.~Krahn. The Rayleigh--Faber--Krahn theorem asserts: For any open set $\Omega\subset\R^d$ of finite measure, the first (lowest) eigenvalue of the Dirichlet Laplacian on $\Omega\subset\R^d$ satisfies
\begin{equation}\label{Rayleigh-Faber-Krahn}
	\lambda^{\rm D}_\Omega \geq\lambda^{\rm D}_{\Omega^*} \,,
\end{equation}
where $\Omega^*\subset\R^d$ is the ball with $|\Omega^*|=|\Omega|$. The proof of \eqref{Rayleigh-Faber-Krahn} proceeds by symmetrization. By scaling, we can write the inequality as
$$
\lambda^{\rm D}_\Omega \geq C_d \, |\Omega|^{-2/d}
$$
with $C_d= \lambda^{\rm D}_B |B|^{2/d}$, where $B\subset\R^d$ is the unit ball. This is a (sharp) lower bound on $\lambda^{\rm D}_\Omega$ in terms of the volume of $\Omega$.

We now turn to lower bounds of a different type. For a restricted class of open sets $\Omega$, one can bound $\lambda^{\rm D}_\Omega$ from below in terms of the inradius
$$ 
R_{\Omega}=\sup _{x \in \Omega} \min _{y \in \partial \Omega}|x-y| \,,
$$
that is, the radius of the largest ball contained in $\Omega$. In the formulation of these results, $\lambda_\Omega^{\rm D}$ denotes the infimum of the spectrum of the selfadjoint realization of $-\Delta$ in $L^2(\Omega)$ with Dirichlet boundary conditions. This number may or may not be an eigenvalue.

Payne and Stakgold \cite{PaSt} and Li and Yau \cite{LiYa} showed that when $\Omega$ is mean-convex, then
\begin{equation}
	\label{eq:hersch}
	\lambda^{\rm D}_\Omega \geq \frac{\pi^2}{4} \, R_\Omega^{-2} \,.
\end{equation}
For convex sets, this had earlier been shown by Hersch \cite{Her0} and Protter \cite{Pr}; see also \cite[Proposition 2.55]{FrLaWe}.

In two dimensions, it can also be shown that $R_\Omega^2 \lambda_\Omega^{\rm D}$ is bounded from below by a positive constant among all simply-connected domains $\Omega$. This is due to Makai \cite{Ma}; see also \cite{Ash}. For general open sets $\Omega\subset\R^d$, however, it is easy to see that there is no positive lower bound on $R_\Omega^2 \lambda_\Omega^{\rm D}$ that depends only on the dimension $d\geq 2$.

In 1983, Lieb \cite{Lieb83} suggested to use instead of the inradius $R_\Omega$, which corresponds to the largest ball contained in $\Omega$, a certain \emph{generalized inradius} that corresponds to a ball that intersects $\Omega$ `significantly'. He introduced the generalized inradius
$$
R_\Omega^{(\psi)} := \sup\left\{ r >0 :\ \sup_{x\in\Omega} \frac{|\Omega\cap B_r(x)|}{|B_r(x)|} \geq \psi \right\},
$$
where the parameter $\psi\in(0,1)$ quantifies how significantly a ball can intersect $\Omega$, and showed the lower bound
$$
\lambda^{\rm D}_\Omega \geq \lambda^{\rm D}_B \, (\psi^{-2/d}-1) \, ( R_\Omega^{(\psi)})^{-2} \,.
$$
Here $B\subset\R^d$ denotes, as before, the unit ball. Avoiding the notion of the generalized inradius, we can deduce from Lieb's theorem the lower bound
\begin{equation}
	\label{eq:liebbound}
	\lambda^{\rm D}_\Omega \geq \frac{\lambda^{\rm D}_B}{r^2} \left( \left( \sup_{x\in\Omega} \frac{|\Omega\cap B_r(x)|}{|B_r(x)|} \right)^{-2/d} - 1 \right),
\end{equation}
which is valid for all $r>0$. It is bounds of the latter form that we will be concerned with in the remainder of this paper.

In passing, we also mention work by Maz'ya and Shubin \cite{MaSh}, where a different and finer notion of generalized inradius is used. Specifically, the significance of the intersection of $\Omega$ with a ball is measured in terms of capacity rather than measure.

In his textbook \cite{Dav89}, Davies proved a bound that is worse than Lieb's bound \eqref{eq:liebbound}, but shares the same quantitative features, namely, the lower bound on $r^2 \lambda_\Omega^{\rm D}$ diverges like $\Psi_r^{-2/d}$ as
\begin{equation}
	\label{eq:defpsir}
	\Psi_r := \sup_{x\in\Omega} \frac{|\Omega\cap B_r(x)|}{|B_r(x)|}
\end{equation}
tends to zero and vanishes like $1-\Psi_r$ as $\Psi_r$ tends to one. Related bounds appear in \cite{CrDe} in the setting of manifolds. Unaware of the argument in \cite{Dav89}, the authors of \cite{FL21} found a similar argument that yields a very explicit lower bound on $r^2 \lambda_\Omega^{\rm D}$ that vanishes linearly as $\Psi_r\to 1$. Among other things, in this paper, we further improve this technique and derive a bound that is precisely of Lieb's form \eqref{eq:liebbound}, except for a different, dimension-dependent constant.

More important than this result, however, is the fact that in contrast to Lieb's proof ours does not rely on the Rayleigh--Faber--Krahn theorem. This opens up the possibility of deriving similar bounds for operators for which symmetrization results are not known. Examples include the polyharmonic operator and the Laplacian on the Heisenberg group, each one considered with Dirichlet boundary conditions. Even if for the Laplacian with a positive Robin function a Rayleigh--Faber--Krahn theorem is valid, we do not see how to adapt Lieb's proof to this case, while, as we will show, the approach that we are proposing does work.


\subsection{Our three examples. Previous results}\label{sec:models}

Let us give an informal presentation of the three model problems that we are considering and recall some known results.

\subsubsection*{Robin Laplacian}

For an open set $\Omega\subset\R^d$ with sufficiently regular boundary and a positive constant $\sigma$, the Robin eigenvalue problem is
\begin{equation}\label{Laplacian-Robin}
	\begin{cases}
		\begin{aligned}
		& -\Delta u = \lambda^{{\rm R}\sigma}_\Omega u  & & \text{in}\ \Omega \,, \\
		& \frac{\partial u}{\partial\nu}+\sigma u =0 & & \text{on}\ \partial\Omega \,.
		\end{aligned}
	\end{cases}
\end{equation}
Here $\nu$ denotes the outward unit normal on $\partial\Omega$ and $\lambda^{{\rm R}\sigma}_\Omega$ is the smallest eigenvalue. The positivity of $\sigma$ implies that $\lambda^{{\rm R}\sigma}$ is nonnegative (and typically positive).

An analogue of the Rayleigh--Faber--Krahn theorem is valid, as shown in \cite{Dan06} and the references cited therein. An analogue for the Hersch--Protter bound \eqref{eq:hersch} is valid when $\Omega$ is convex. Indeed, Kovařík \cite{Kov14} showed
\begin{equation}\label{Hynek}
	\lambda^{{\rm R}\sigma}_\Omega \geq \frac{1}{4} \frac{\sigma}{R_{\Omega}\left(1+\sigma R_{\Omega}\right)} \,. 
\end{equation}
In contrast to \eqref{eq:hersch}, however, it is unclear whether the constant $1/4$ is sharp. In the appendix to this paper, we extend the validity of \eqref{Hynek} to mean-convex sets and improve the constant for large values of $\sigma R_\Omega$.


\subsubsection*{Polyharmonic operator}

For an open set $\Omega\subset\R^d$ and an integer $m\in\mathbb N=\{1,2,3,\ldots\}$, we are interested in the Dirichlet problem for the $m$-harmonic operator,
\begin{equation}\label{eq:polyharmonic}
	\begin{cases}
		\begin{aligned}
			& (-\Delta)^m u = \lambda^{(m)}_\Omega u  & & \text{in}\ \Omega \,, \\
			& \partial^\alpha u =0 & & \text{on}\ \partial\Omega \ \text{for all}\ |\alpha|<m \,.
		\end{aligned}
	\end{cases}
\end{equation}

The validity of a Rayleigh--Faber--Krahn theorem, that is, the question whether balls minimize the lowest eigenvalue of the polyharmonic operator among sets with a given volume, is a major open problem. The only results that we are aware of concern the bi-harmonic case ($m=2$) in two \cite{Nadir95} and three \cite{AB95} dimensions.

An analogue of the Hersch--Protter bound \eqref{eq:hersch} for convex $\Omega$, that is a lower bound on the lowest eigenvalue in terms of the inradius, was shown by Owen in \cite{Owen99}. As far as we know, the sharp constant in this inequality is not known when $m\geq 2$.


\subsubsection*{Sub-Laplacian on the Heisenberg group}

Let $N\geq 1$. We work on the Heisenberg group $\Hei^N = \R^{2N+1}$ with the group operation defined as follows. Denoting elements of $\Hei^N$ by $(z,t)$ and $(z',t')$ with $z,z'\in\R^{2N}$, $t,t'\in\R$, we have
$$
(z,t)\circ(z',t') = (z+z',t+t' + \frac12 z\cdot J z') \,,
$$
where $J$ is the $(2N)\times(2N)$ matrix defined by
$$
J = \begin{pmatrix} 0 & 1 \\ - 1 & 0 \end{pmatrix}
$$
with $0$ and $1$ denoting the zero and identity $N\times N$ matrix, respectively. The dot $\cdot$ denotes the ordinary scalar product in $\R^{2N}$. The Lebesgue measure on $\R^{2N+1}$ is a (left- and right-)Haar measure for this group and integration will always be understood with respect to this measure.

We introduce the first order differential operators
$$
Z_n := \partial_{z_n} -\frac12 \ e_n\cdot Jz \ \partial_t \,,
\qquad
n=1,\ldots,2N \,.
$$
Here $e_n$ is the $n$-th standard unit vector in $\R^{2N}$.

The Dirichlet eigenvalue problem on an open set $\Omega\subset\Hei^N$ is
\begin{equation}\label{Laplacian-Dirichletheisen}
	\begin{cases}
		\begin{aligned}
			& -\sum_{n=1}^{2N} Z_n^2 u = \lambda^{\rm H}_\Omega u  & & \text{in}\ \Omega \,, \\
			& u =0 & & \text{on}\ \partial\Omega \,,
		\end{aligned}
	\end{cases}
\end{equation}
and again we are interested in lower bounds on $\lambda^{\rm H}_\Omega$.

The analogue of the Rayleigh--Faber--Krahn theorem is not known. The question of good (even if not sharp) constants on $|\Omega|^{1/(N+1)}\lambda_\Omega^{\rm H}$ is investigated in \cite{FrHe}. Analogues of Croke's bounds on $\lambda_\Omega^{\rm H}$ appear in \cite{PrRiSe}.


\section{Main results}

In this section we present the main results of this paper in the three problems described in the previous subsection.

\subsection{The Robin Laplacian}

Let $d\geq 2$ and let $\Omega\subset\R^d$ be an open set whose boundary is uniformly Lipschitz continuous in the sense of \cite[Definition 13.11]{Le}. Under this assumption, it is known \cite[Theorem 18.40]{Le} that there is a bounded trace operator from $H^1(\Omega)$ to $L^2(\partial\Omega)$, where the latter space is defined with respect to the usual surface measure, denoted by $dS$. For any constant $\sigma>0$ the quadratic form
$$
\int_\Omega |\nabla u(x)|^2\,dx + \sigma \int_{\partial\Omega} |u(y)|^2\,dS(y) \,,
$$
defined for $u\in H^1(\Omega)$, is closed in $L^2(\Omega)$ and therefore generates a selfadjoint operator in $L^2(\Omega)$. We let $\lambda_\Omega^{\rm R\sigma}$ denote the bottom of the spectrum of this operator.

We recall that the quantity $\Psi_r$ is defined in \eqref{eq:defpsir}.
	
\begin{thm}\label{main} 
	Let $\Omega\subset \mathbb{R}^d$ be open with uniformly Lipschitz continuous boundary and let $\sigma>0$. Then, for any $r>0$,
	$$
	\lambda_\Omega^{{\rm R}\sigma} \geq d\, \frac{\sigma^2}{(1+2\sigma r)^2} \left( 1- \Psi_r \right).
	$$
\end{thm}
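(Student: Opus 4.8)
The plan is to estimate from below the Rayleigh quotient
\[
\lambda_\Omega^{{\rm R}\sigma} = \inf_{0\neq u\in H^1(\Omega)}\frac{\int_\Omega|\nabla u|^2\,dx + \sigma\int_{\partial\Omega}|u|^2\,dS}{\int_\Omega|u|^2\,dx}
\]
by a pointwise weight, in the spirit of a directional Hardy inequality. First I would write the Dirichlet energy as a spherical average of directional energies,
\[
\int_\Omega|\nabla u|^2\,dx = \frac{d}{|\Sph^{d-1}|}\int_{\Sph^{d-1}}\int_\Omega|\omega\cdot\nabla u|^2\,dx\,d\omega ,
\]
which accounts for the factor $d$ (it rests on $\frac{d}{|\Sph^{d-1}|}\int\omega_i\omega_j\,d\omega=\delta_{ij}$). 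Fixing $\omega$ and using Fubini in $\R^d=\R\omega\oplus\omega^\perp$ turns the inner integral into a sum of one–dimensional integrals of $|\partial_t u|^2$ over the chords that $\Omega$ cuts on the lines $y+\R\omega$, $y\in\omega^\perp$. The delicate point is feeding in the boundary term. Since the projection of $\partial\Omega$ onto $\omega^\perp$ carries the Jacobian $|\nu\cdot\omega|$, I would assign to a chord endpoint $z$ the one–dimensional Robin parameter $\sigma|\nu(z)\cdot\omega|$; because $\frac{d}{|\Sph^{d-1}|}\int_{\Sph^{d-1}}\sigma|\nu\cdot\omega|\,|\nu\cdot\omega|\,d\omega=\sigma$, this reconstructs the boundary form \emph{exactly},
\[
\int_\Omega|\nabla u|^2 + \sigma\int_{\partial\Omega}|u|^2\,dS = \frac{d}{|\Sph^{d-1}|}\int_{\Sph^{d-1}}\int_{\omega^\perp}\Big[\int_{\text{chords}}|\partial_t u|^2\,dt + \sum_{\text{endpoints }z}\sigma|\nu(z)\cdot\omega|\,|u(z)|^2\Big]dy\,d\omega ,
\]
and it is this choice that ultimately yields the clean constant $\sigma$ instead of a dimensional multiple of it.

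The second ingredient is a sharp one–dimensional Robin Hardy inequality: on a half–line, for $\tau>0$,
\[
\int_0^\infty|f'|^2\,dt + \tau|f(0)|^2 \geq \frac14\int_0^\infty\Big(t+\tfrac1{2\tau}\Big)^{-2}|f|^2\,dt .
\]
I would prove it by the ground–state substitution $f=\var g$ with $\var(t)=(t+\tfrac1{2\tau})^{1/2}$: since $\var'\var\equiv\tfrac12$ and $-\var''=\tfrac14(t+\tfrac1{2\tau})^{-2}\var$, integration by parts gives $\int_0^\infty|f'|^2 = \int_0^\infty\var^2|g'|^2 + \tfrac14\int_0^\infty(t+\tfrac1{2\tau})^{-2}|f|^2 - \tau|f(0)|^2$, and the claim follows by discarding the nonnegative gradient term. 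Splitting each chord at its midpoint and applying this on each half (the free midpoint only contributes a nonnegative term) controls $|u|^2$ by the weight $\tfrac14(\delta(t)+\tfrac1{2\tau_{\rm near}})^{-2}$, where $\delta(t)$ is the distance to the nearer endpoint. Inserting this into the decomposition above produces the pointwise lower bound
\[
\int_\Omega|\nabla u|^2 + \sigma\int_{\partial\Omega}|u|^2 \geq \int_\Omega\mu_\sigma(x)|u(x)|^2\,dx, \qquad \mu_\sigma(x) := \frac{d}{|\Sph^{d-1}|}\int_{\Sph^{d-1}}\frac14\Big(\delta_\omega(x)+\frac1{2\sigma|\nu_\omega(x)\cdot\omega|}\Big)^{-2}d\omega ,
\]
where $\delta_\omega(x)$ is the distance from $x$ to $\partial\Omega$ along $\pm\omega$ and $\nu_\omega(x)$ the outward normal at the nearer of the two boundary points.

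Finally I would pass from $\mu_\sigma$ to the stated bound. Writing $a=\tfrac1{2\sigma}$ and using $\delta_\omega+\frac{a}{|\nu_\omega\cdot\omega|}\leq\frac{\delta_\omega+a}{|\nu_\omega\cdot\omega|}$ together with $\delta_\omega\leq r$ on the relevant set,
\[
\frac14\Big(\delta_\omega+\frac{a}{|\nu_\omega\cdot\omega|}\Big)^{-2} \geq \frac14\,\frac{|\nu_\omega\cdot\omega|^2}{(r+a)^2}\,\1[\delta_\omega(x)\leq r] = \frac{\sigma^2}{(1+2\sigma r)^2}\,|\nu_\omega\cdot\omega|^2\,\1[\delta_\omega(x)\leq r] ,
\]
so that $\mu_\sigma(x)\geq d\,\frac{\sigma^2}{(1+2\sigma r)^2}\,\frac1{|\Sph^{d-1}|}\int_{\Sph^{d-1}}|\nu_\omega(x)\cdot\omega|^2\,\1[\delta_\omega(x)\leq r]\,d\omega$. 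The theorem then reduces to the purely geometric inequality
\[
\frac1{|\Sph^{d-1}|}\int_{\Sph^{d-1}}|\nu_\omega(x)\cdot\omega|^2\,\1[\delta_\omega(x)\leq r]\,d\omega \geq \frac{|B_r(x)\setminus\Omega|}{|B_r(x)|} \geq 1-\Psi_r ,
\]
whose last step is immediate from the definition of $\Psi_r$. I expect this geometric inequality to be the main obstacle. In the Dirichlet limit $\sigma\to\infty$ the factor $|\nu_\omega\cdot\omega|^2$ disappears and the estimate reduces to the elementary fact that, in polar coordinates about $x$, the angular measure of directions meeting $\partial\Omega$ within distance $r$ dominates the deficit $|B_r(x)\setminus\Omega|/|B_r(x)|$; for finite $\sigma$ the weight $|\nu_\omega\cdot\omega|^2\leq1$ shrinks the integrand, and one must show there is still enough room. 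A direct computation for a half–space confirms the inequality with strict slack, and in general I would establish it by writing the left–hand side through the radial–graph parametrization of $\partial\Omega$ as seen from $x$ (whose surface element is $\delta_\omega^{\,d-1}|\nu_\omega\cdot\omega|^{-1}\,d\omega$) and comparing with the layer–cake expression for $|B_r(x)\setminus\Omega|$; the delicate part is controlling the grazing directions, where $|\nu_\omega\cdot\omega|$ is small.
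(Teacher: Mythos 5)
Your directional splitting of the Dirichlet form, the bookkeeping identity $\frac{d}{|\Sph^{d-1}|}\int_{\Sph^{d-1}}|\nu\cdot\omega|^2\,d\omega=1$, and the one-dimensional Robin--Hardy inequality (with its ground-state substitution proof) are all correct, so the intermediate Hardy inequality you obtain, with weight $\frac{d}{4|\Sph^{d-1}|}\int_{\Sph^{d-1}}\bigl(\delta_\omega(x)+\frac{1}{2\sigma|\nu_\omega(x)\cdot\omega|}\bigr)^{-2}d\omega$, is true. The proof collapses at the step you yourself flagged as the main obstacle: the ``geometric inequality''
\begin{equation*}
\frac{1}{|\Sph^{d-1}|}\int_{\Sph^{d-1}}|\nu_\omega(x)\cdot\omega|^2\,\1\bigl(\delta_\omega(x)\le r\bigr)\,d\omega\ \ge\ 1-\psi_r(x)
\end{equation*}
is false. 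Take the slab $\Omega=\{|x_d|<\epsilon\}$, $x=0$ and $\epsilon\ll r$: then $\delta_\omega(0)=\epsilon/|\omega_d|$ and $\nu_\omega(0)=\pm e_d$, so the left side is at most $\frac{1}{|\Sph^{d-1}|}\int_{\Sph^{d-1}}\omega_d^2\,d\omega=\frac1d$, while the right side tends to $1$; hence the inequality fails for every $d\ge2$. Even your sanity check is wrong in $d\ge3$: for the half-space $\{x_d>0\}$ with $x$ at height $h$, the left side tends to $\frac1d$ as $h/r\to0$ while the right side tends to $\frac12$, so the ``strict slack'' you report for the half-space is a two-dimensional accident.

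The failure is structural, not a repairable defect of the last estimate. Because your reconstruction of the boundary term spends the projection factors $|\nu\cdot\omega|$, your Hardy weight is damped at grazing incidence, and the spherical average of $|\nu\cdot\omega|^2$ costs exactly the factor $d$ that Theorem \ref{main} asserts. Quantitatively, for the slab your weight at the origin equals $\frac{d}{4|\Sph^{d-1}|}\int_{\Sph^{d-1}}\bigl(\frac{\epsilon}{|\omega_d|}+\frac{1}{2\sigma|\omega_d|}\bigr)^{-2}d\omega=\frac{\sigma^2}{(1+2\sigma\epsilon)^2}$, whereas Theorem \ref{main}, applied with $\epsilon\ll r\ll\sigma^{-1}$ (so that $\Psi_r\approx0$), demands roughly $d\sigma^2$; no pointwise lower bound on your weight can close this gap. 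The paper takes a different route: it does not rederive the Hardy inequality, but imports it from Kova\v{r}\'ik--Laptev \cite{KL12} in the angle-free form of Proposition \ref{Hardyinequality}, whose weight $\frac{d}{4|\Sph^{d-1}|}\int_{\Sph^{d-1}}(\delta_\omega+\frac{1}{2\sigma})^{-2}d\omega$ is $\approx d\sigma^2$ at the slab center, and then it needs only the angle-free estimate \eqref{eq:lemma3} of Lemma \ref{lemma} with $\alpha=2$ and $\ell=\frac1{2\sigma}$, which follows from $(1-X)_+\le\1(X<1)$ and is unproblematic. Be aware, though, that the angle-free inequality is \emph{not} reachable by your (exact) decomposition: taking the one-dimensional parameter $\tau=\sigma$ instead of $\sigma|\nu\cdot\omega|$ reconstructs $\frac{2|B^{d-1}|}{|B^d|}\,\sigma\int_{\partial\Omega}|u|^2\,dS$, an over-counting by the factor $\frac{2|B^{d-1}|}{|B^d|}>1$, where $B^k$ denotes the unit ball of $\R^k$. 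In fact, testing Proposition \ref{Hardyinequality} on the slab with $u\equiv1$ (laterally cut off) in the regime $\sigma\epsilon\gg1$ forces $|B^{d-1}|\le|B^d|$, which holds only for $d\le5$; so the angular factor produced by your derivation is a genuine feature of this method, and a correct proof along your lines must either handle it or invoke a Hardy inequality proved by different means, after checking its precise form in \cite{KL12}.
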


\begin{rem}
	The Robin Laplacian can also be defined for a positive Robin function $\sigma$ on $\partial\Omega$, not necessarily a constant. Denoting by $\lambda^{{\rm R}\sigma}_\Omega$ the infimum of the spectrum even in this more general situation, we deduce from the inequality $\sigma\geq\mathrm{ess}\inf\sigma =:\underline\sigma$ and the variational principle that $\lambda^{{\rm R}\sigma}_\Omega \geq \lambda^{{\rm R}\underline\sigma}_\Omega$. Assuming $\underline\sigma>0$, we obtain a lower bound on $\lambda^{{\rm R}\underline\sigma}_\Omega$ by Theorem \ref{main}. Consequently, we also get a lower bound on $\lambda^{{\rm R}\sigma}_\Omega$ in terms of $\underline\sigma>0$.
\end{rem}

Theorem \ref{main} is proved in Section \ref{sec:robin}.


\subsection{The polyharmonic operator}

Let $\Omega\subset\R^d$, $d\geq 2$, be an open set and let $m\in\mathbb N=\{1,2,3,\ldots\}$. We are interested in the Dirichlet realization of the polyharmonic operator $(-\Delta)^m$ in $\Omega$. This is the selfadjoint operator in $L^2(\Omega)$ generated by the quadratic form
$$
h_\Omega^{(m)}[u] :=
\begin{cases} 
	\int_\Omega |(-\Delta)^{m/2}u(x)|^2\,dx & \text{if}\ m \ \text{is even}\,,\\
	\int_\Omega |\nabla(-\Delta)^{(m-1)/2}u(x)|^2\,dx & \text{if}\ m \ \text{is odd}\,,
\end{cases}
$$
defined for $u\in H^m_0(\Omega)$, the closure of $C^\infty_c(\Omega)$ in $H^m(\Omega)$. The number $\lambda_\Omega^{(m)}$ denotes the infimum of the spectrum of this operator.

\begin{thm}\label{mainPolyharmonic}
	Let $\Omega \subset \mathbb{R}^d$ be open and let $m\in\mathbb N$. Then, for any $r>0$,
	\begin{equation}\label{eq:mainpoly1}
		\lambda^{(m)}_{\Omega} 
		\geq C_{m,d} \, \frac{1}{r^{2m}} \left( \Psi_r^{-2m/d} - 1 \right)			
	\end{equation}
	and
	\begin{equation}
		\label{eq:mainpoly2}
		\lambda^{(m)}_{\Omega} 
		\geq c_{m,d} \, C_{m,d} \, \frac{1}{r^{2m}} \left( 1- \Psi_r \right)
	\end{equation}
	where
	\begin{equation}
		\label{eq:owenconst}
			C_{m,d} :=\frac{(d+2 m-2)(d+2 m-4) \cdots d \cdot (2 m-1) (2 m-3) \cdots 1}{4^m}
	\end{equation}
	and
	\begin{equation}
		\label{eq:polyharmconst}
		c_{m,d} := \left( \frac{d+2m}{2m} \right)^{2m/d} \frac{d+2m}{d} \,. 
	\end{equation}
\end{thm}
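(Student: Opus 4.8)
The plan is to bypass symmetrization entirely and instead squeeze $h_\Omega^{(m)}$ between two Hardy-type functionals: first bound it from below by an angular average of one-dimensional Hardy--Rellich energies, and then estimate the resulting weight pointwise by the local density of $\Omega$ around $x$. Throughout it suffices to work with $u\in C_c^\infty(\Omega)$ and to invoke the variational characterization $\lambda_\Omega^{(m)}=\inf_u h_\Omega^{(m)}[u]/\|u\|_{L^2(\Omega)}^2$, so that a pointwise lower bound on the weight immediately produces a lower bound on $\lambda_\Omega^{(m)}$.

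The first step is an exact averaging identity. Expanding $(-\Delta)^m=(-1)^m(\sum_j\partial_j^2)^m$ by the multinomial theorem and integrating by parts gives $h_\Omega^{(m)}[u]=\sum_{|\alpha|=m}\binom{m}{\alpha}\int_\Omega|\partial^\alpha u|^2$, the key point being that for $u\in C_c^\infty(\Omega)$ the quantity $\int_\Omega\partial^\gamma u\,\overline{\partial^\delta u}$ depends only on $\gamma+\delta$. Writing the directional derivative $\partial_\omega^m u=\sum_{|\alpha|=m}\binom{m}{\alpha}\omega^\alpha\partial^\alpha u$, squaring, integrating over $\Omega$, and using the same invariance together with $\sum_{\alpha+\beta=\mu}\binom{m}{\alpha}\binom{m}{\beta}=\binom{2m}{\mu}$ and the spherical moment formula for $\int_{\mathbb{S}^{d-1}}\omega^{2\nu}\,d\omega$, I expect the clean identity
\[
\int_{\mathbb{S}^{d-1}}\int_\Omega|\partial_\omega^m u|^2\,dx\,d\omega=\frac{(2m-1)!!\,|\mathbb{S}^{d-1}|}{(d+2m-2)(d+2m-4)\cdots d}\,h_\Omega^{(m)}[u],
\]
where the cross terms survive only in the balanced, all-even multi-indices because of the integration-by-parts invariance.

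Next I would insert, along each line $s\mapsto x+s\omega$, the one-dimensional Hardy--Rellich inequality $\int|f^{(m)}|^2\,ds\ge \frac{((2m-1)!!)^2}{4^m}\int |f|^2/\delta(s)^{2m}\,ds$, with $\delta(s)$ the distance from $s$ to the complement of the slice $\{s:x+s\omega\in\Omega\}$, and then integrate in $\omega$. Combining this with the averaging identity collapses the two factors of $(2m-1)!!$ and yields exactly
\[
h_\Omega^{(m)}[u]\ge C_{m,d}\int_\Omega W(x)\,|u(x)|^2\,dx,\qquad W(x):=\frac{1}{|\mathbb{S}^{d-1}|}\int_{\mathbb{S}^{d-1}}\frac{d\omega}{\delta_\omega(x)^{2m}},
\]
with $\delta_\omega(x)=\inf\{|s|:x+s\omega\notin\Omega\}$ and constant precisely $C_{m,d}$ as in \eqref{eq:owenconst}. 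I expect the main obstacle to lie here: establishing the one-dimensional inequality on a bounded interval with the nearest-endpoint weight and the sharp constant $((2m-1)!!)^2/4^m$ for $m\ge 2$, which is the genuinely new analytic input (for $m=1$ it is the classical Hardy inequality with constant $1/4$).

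It then remains to bound $W(x)$ pointwise from below by the local density. Fixing $x$, I would use $\delta_\omega(x)\le\tau_\omega:=\inf\{\rho>0:x+\rho\omega\notin\Omega\}$ and the inclusion of the star-shaped core $\{x+\rho\omega:\rho<\min(\tau_\omega,r)\}$ in $\Omega\cap B_r(x)$; writing $f=\tau_\omega/r$ reduces everything to one-variable inequalities for the angular averages of $f^{-2m}$ and $\min(f,1)^d$. For \eqref{eq:mainpoly1}, since the right-hand side decreases in the density and $|\Omega\cap B_r(x)|/|B_r(x)|\le\Psi_r$, convexity of $t\mapsto t^{-2m/d}$ (Jensen) gives $\langle\min(f,1)^{-2m}\rangle\ge\langle\min(f,1)^d\rangle^{-2m/d}$, while $\langle f^{-2m}\rangle\ge\langle\min(f,1)^{-2m}\rangle-1$ because the two differ only where $f>1$ and by at most $1$ on average; this is exactly the source of the ``$-1$''. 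For \eqref{eq:mainpoly2}, using $1-\Psi_r\le |B_r(x)\setminus\Omega|/|B_r(x)|\le\langle 1-\min(f,1)^d\rangle$, it suffices to check the pointwise bound $f^{-2m}\ge c_{m,d}\,(1-\min(f,1)^d)$; maximizing $t^{2m}(1-t^d)$ over $t\in(0,1)$ at $t^d=2m/(2m+d)$ shows the optimal constant is precisely $c_{m,d}$ from \eqref{eq:polyharmconst}. Taking the infimum over $x$ and combining with the Hardy inequality finishes both estimates.
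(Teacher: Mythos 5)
Your proposal is correct, and at the top level it has the same skeleton as the paper's proof: a Hardy inequality $h_\Omega^{(m)}[u]\geq C_{m,d}\int_\Omega |u(x)|^2 M_\Omega^{(m)}(x)\,dx$ with the directional weight $M_\Omega^{(m)}(x)=|\Sph^{d-1}|^{-1}\int_{\Sph^{d-1}}\delta_\omega(x)^{-2m}\,d\omega$, followed by pointwise lower bounds on that weight in terms of the local density $\psi_r(x)$, then $\psi_r(x)\le\Psi_r$ and the variational principle. The differences are in how you obtain the two ingredients, and they are worth recording. First, the paper does not prove the Hardy inequality at all: it is quoted as Proposition \ref{hardyowen} from \cite{Owen99}. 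Your re-derivation is essentially Owen's own argument, and it is sound: the averaging identity holds with exactly the constant you state (the combinatorial check is $(2\nu-1)!!\binom{2m}{2\nu}=(2m-1)!!\binom{m}{\nu}$ for multi-indices $|\nu|=m$, which makes the coefficient of $\int|\partial^\nu u|^2$ proportional to $\binom{m}{\nu}$), and the one-dimensional inequality you flag as the ``main obstacle'' — nearest-endpoint weight on each component interval of a slice, constant $((2m-1)!!)^2/4^m$, for functions vanishing near both endpoints — is true and is precisely the analytic core of the cited reference; it follows by splitting each interval at its midpoint and iterating one-sided weighted Hardy inequalities $\int (s-a)^{-2k}|g'|^2\,ds\ \ge\ \tfrac{(2k+1)^2}{4}\int (s-a)^{-2k-2}|g|^2\,ds$, whose boundary terms at the midpoint have a favorable sign. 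So nothing in your outline actually fails; you have simply reconstructed the citation. Second, your pointwise weight bounds give a genuinely different proof of the paper's Lemma \ref{lemma} (with $\alpha=2m$): for \eqref{eq:mainpoly1} the paper runs a layer-cake plus bathtub-principle argument, while you use Jensen's inequality for the convex function $t\mapsto t^{-2m/d}$ applied to $\min(\tau_\omega/r,1)^d$ together with the pointwise observation $f^{-2m}\ge \min(f,1)^{-2m}-\1(f>1)$; this is simpler and yields the identical constant. For \eqref{eq:mainpoly2}, your optimization of $t^{2m}(1-t^d)$ at $t^d=2m/(2m+d)$ is exactly the paper's elementary inequality \eqref{eq:elementary} with $\beta=2m/d$, again with the same $c_{m,d}$. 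What your route buys is a self-contained proof and a cleaner technical lemma; what the paper's buys is brevity and a lemma stated for a general exponent $\alpha>0$ (and with an additive shift $\ell$), so that one statement also serves the Robin and Heisenberg theorems.
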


\begin{remarks}
	\begin{enumerate}
		\item[(a)] Let us compare the bounds \eqref{eq:mainpoly1} and \eqref{eq:mainpoly2}. In the regime where $\Psi_r$ is close to $0$, the bound \eqref{eq:mainpoly1} is clearly superior. In the opposite regime where $\Psi_r$ is close to $1$, the right sides of both bounds vanish linearly with $1-\Psi_r$, but the coefficient in \eqref{eq:mainpoly2} is superior. For this reason, none of the bounds implies the other.
		
		\medspace
		
		\item[(b)] In the case $m=1$, the bounds \eqref{eq:mainpoly1} and \eqref{eq:mainpoly2} read
		\begin{equation}
			\label{eq:davieslieb1}
			\lambda_\Omega^{\rm D} \geq \frac d4 \, \frac 1{r^2} \left( \Psi_r^{-2/d} - 1 \right)
		\end{equation}
		and
		\begin{equation}
			\label{eq:davieslieb2}
			\lambda_\Omega^{\rm D} \geq \left( \frac{d+2}{2} \right)^{2/d} \frac{d+2}{4} \, \frac{1}{r^2} \left( 1 - \Psi_r \right).
		\end{equation}		
		Let us compare these bounds with existing results in the literature.
		
		The bound \eqref{eq:davieslieb1} is precisely of the same form as Lieb's bound \eqref{eq:liebbound}, with the only exception that the constant $d/4$ in \eqref{eq:davieslieb1} is replaced by the constant $\Lambda_B^{\rm D}$, the first eigenvalue of the Dirichlet Laplacian on the unit ball, in \eqref{eq:liebbound}. We have $\lambda_B^{\rm D}=j_{(d-2)/2,1}^2$, where $j_{\nu,1}$ is the first zero of the Bessel function $J_\nu$ of order $\nu$. Using well-known bounds on zeros of Bessel functions, one can show that $\lambda_B^{\rm D}>d/4$, so Lieb's bound is better than \eqref{eq:davieslieb1}.
		
		In \cite{Dav89}, Davies proves a bound similar to \eqref{eq:davieslieb1}, which shares the features that it blows up like $\Psi_r^{-2/d}$ as $\Psi_r \to 0$ and that vanishes linearly as $\Psi_r \to 1$. The constants in \eqref{eq:davieslieb1}, however, are better than the constants in \cite{Dav89}.
		
		In \cite{FL21}, a bound of the form \eqref{eq:davieslieb2} is shown, but the constant in \eqref{eq:davieslieb2} is better.
		
		\medspace
		
		\item[(c)] We do not know whether the blow up like $\Psi_r^{-2/d}$ as $\Psi_r\to 0$ or the linear vanishing as $\Psi_r\to1$ in a bound like \eqref{eq:davieslieb1} is optimal.		
	\end{enumerate}
\end{remarks}

Theorem \ref{mainPolyharmonic} is proved in Section \ref{sec:polyharm}.


\subsection{The Heisenberg Laplacian}

We shall use the notation from Subsection \ref{sec:models}. Sometimes we write $p$, rather than $(z,t)$, for points in $\Hei^N$. 

Let $\Omega\subset\Hei^N$ be an open set. The Folland--Stein Sobolev space $S^1_0(\Omega)$ is defined as the closure of $C^1_c(\Omega)$ with respect to the quadratic form
$$
\int_\Omega \sum_{n=1}^{2N} |(Z_n u)(p)|^2\,dp + \int_\Omega |u(p)|^2\,dp \,.
$$
The Dirichlet sub-Laplacian on $\Omega$ is the selfadjoint operator in $L^2(\Omega)$ generated by the quadratic form
$$
\int_\Omega \sum_{n=1}^{2N} |(Z_n u)(p)|^2\,dp \,,
$$
defined for $u\in S^1_0(\Omega)$. The number $\lambda_\Omega^{\rm H}$ denotes the infimum of the spectrum of this operator. 

To formulate our lower bound on $\lambda_\Omega^{\rm H}$ we need to introduce some notation. For $p\in\Hei^N$ we set
$$
H(p) := \{ p\circ (z',0) :\ z'\in\R^{2N} \} \,.
$$
This is the affine hyperplane through the point $p=(z,t)$ with normal vector $(\frac12 Jz,1)^{\rm T}$. The induced Lebesgue measure on $H(p)$ is denoted by $\sigma$. (We hope that this does not create any confusion with the use of this letter in the context of the Robin problem.) Moreover, for $r>0$ we set
$$
B_r^{\rm H}(p) := \{ p\circ (z',0):\ |z'|< r \} \,,
$$
where $|z'|$ denotes the Euclidean length of $z'\in\R^{2N}$. Finally, let
$$
\tilde\Psi_r := \sup_{p\in\Omega} \frac{\sigma(\Omega\cap B^{\rm H}_r(p))}{\sigma(B_r^{\rm H}(p))}
\qquad\text{for all}\ r>0 \,.
$$

\begin{thm}\label{mainheisen}
	Let $\Omega\subset\Hei^N$ be open. Then, for any $r>0$,
	$$
	\lambda_\Omega^{\rm H} \geq \frac N2 \, \frac1{r^2} \left( \tilde\Psi_r^{-1/N} -1 \right)
	$$
	and	
	$$
	\lambda_\Omega^{\rm H} \geq \frac{(N+1)^{(N+1)/N}}{2} \, \frac1{r^2} \left( 1 - \tilde\Psi_r \right).
	$$
\end{thm}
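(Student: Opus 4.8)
The plan is to reduce the theorem to the Euclidean Dirichlet bounds \eqref{eq:davieslieb1} and \eqref{eq:davieslieb2} in dimension $d=2N$, exploiting the fact that in each fixed horizontal direction the problem is modelled on exactly the one-dimensional situation arising in the proof of Theorem \ref{mainPolyharmonic} for $m=1$. The starting point is the directional decomposition
\begin{equation*}
	\sum_{n=1}^{2N} |Z_n u(p)|^2 = \frac{2N}{|\Sph^{2N-1}|} \int_{\Sph^{2N-1}} \Bigl| \sum_{n=1}^{2N}\omega_n (Z_n u)(p) \Bigr|^2 \, d\omega \,,
\end{equation*}
which is the purely algebraic identity $|a|^2=\tfrac{2N}{|\Sph^{2N-1}|}\int(\omega\cdot a)^2\,d\omega$ applied to the vector $a=(Z_1u(p),\dots,Z_{2N}u(p))\in\R^{2N}$. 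Writing $\omega\cdot\na_H u := \sum_n\omega_n Z_n u$, it suffices to bound $\int_\Omega |\omega\cdot\na_H u|^2\,dp$ from below for each fixed $\omega\in\Sph^{2N-1}$ and then integrate over $\omega$.

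For fixed $\omega$ I would foliate $\Hei^N$ by the horizontal lines $s\mapsto\gamma_p(s):=p\circ(s\omega,0)$, which are the integral curves of $\omega\cdot\na_H$ and satisfy, by a direct computation valid for all $s$, the identity $\tfrac{d}{ds}u(\gamma_p(s)) = (\omega\cdot\na_H u)(\gamma_p(s))$. The crucial point, and the place where the Heisenberg geometry enters, is that although these lines are \emph{not} mutually parallel (their slope in the $t$-direction depends on the horizontal position), the generating vector field $(w,\tau)\mapsto(\omega,\tfrac12 w\cdot J\omega)$ is divergence-free; equivalently $\gamma_p(s)$ is right translation of $p$ by $(s\omega,0)$, and right translations preserve Haar$=$Lebesgue measure. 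Hence Lebesgue measure disintegrates as arclength $ds$ along each line times a transversal measure, and Fubini gives
\begin{equation*}
	\int_\Omega |\omega\cdot\na_H u|^2\,dp = \int \Bigl( \int_{\{s:\ \gamma(s)\in\Omega\}} \bigl|\tfrac{d}{ds}u(\gamma(s))\bigr|^2\,ds \Bigr)\,d(\text{transversal}) \,.
\end{equation*}
On each line the inner integral is a genuinely one-dimensional quantity, to which I would apply verbatim the one-dimensional Hardy inequalities underlying \eqref{eq:davieslieb1} and \eqref{eq:davieslieb2} in the case $m=1$; the two bounds in the theorem correspond precisely to these two one-dimensional estimates.

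It then remains to reassemble the transversal and spherical integrations into a pointwise potential and to identify the resulting geometric quantity with $\tilde\Psi_r$. Reversing the disintegration produces $\int_\Omega |\na_H u|^2\,dp \ge \int_\Omega V(p)\,|u(p)|^2\,dp$, hence $\lambda_\Omega^{\rm H}\ge\inf_{p\in\Omega}V(p)$, where $V(p)$ is $\tfrac{2N}{|\Sph^{2N-1}|}$ times the average over $\omega\in\Sph^{2N-1}$ of the one-dimensional weight along $\gamma_p$. Here two elementary computations close the argument. First, the segments $s\mapsto\gamma_p(s)$, $|s|<r$, sweep out exactly the horizontal ball $B_r^{\rm H}(p)=\{p\circ(z',0):|z'|<r\}$ as $\omega$ ranges over $\Sph^{2N-1}$, so the spherical average of the line weight is controlled by the $2N$-dimensional measure of $\{z'\in\R^{2N}:|z'|<r,\ p\circ(z',0)\in\Omega\}$, exactly as in the Euclidean ball argument with $d=2N$. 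Second, the parametrization $z'\mapsto p\circ(z',0)$ has Jacobian $\sqrt{1+\tfrac14|z|^2}$ with respect to the induced surface measure $\sigma$ on $H(p)$, depending only on the base point $p=(z,t)$, so this factor cancels in the ratio and
\begin{equation*}
	\frac{\sigma(\Omega\cap B_r^{\rm H}(p))}{\sigma(B_r^{\rm H}(p))} = \frac{|\{z'\in\R^{2N}:|z'|<r,\ p\circ(z',0)\in\Omega\}|}{|\{z'\in\R^{2N}:|z'|<r\}|} \,.
\end{equation*}

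With these identifications the estimate becomes formally identical to the Dirichlet-Laplacian estimate in dimension $d=2N$, and substituting $d=2N$ into \eqref{eq:davieslieb1} and \eqref{eq:davieslieb2} reproduces the constants $\tfrac N2$ and $\tfrac{(N+1)^{(N+1)/N}}{2}$. I expect the main obstacle to be the rigorous justification of the disintegration of Lebesgue measure along the non-parallel horizontal lines—controlling the transversal measure and the endpoints of $\{s:\gamma_p(s)\in\Omega\}$ uniformly enough to apply the one-dimensional Hardy inequality and to interchange the spherical, transversal, and line integrations; it is precisely the divergence-freeness of $\omega\cdot\na_H$ (Jacobian one) that makes this bookkeeping go through.
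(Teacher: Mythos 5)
Your proposal is correct and follows essentially the same route as the paper's proof: the paper likewise decomposes $\sum_n|Z_nu|^2$ over directions $\omega\in\Sph^{2N-1}$, disintegrates Lebesgue measure along the horizontal lines $s\mapsto p\circ(s\omega,0)$ via the explicit measure-preserving change of variables $(q,t,s)\mapsto(q+s\omega,\,t+\frac{s}{2}\,q\cdot J\omega)$, applies the one-dimensional Hardy inequality on each line (this is Proposition \ref{hardyheisenberg}), and then converts the spherically averaged weight into $\tilde\Psi_r$ by re-running the argument of Lemma \ref{lemma} with $d=2N$ and $\alpha=2$, with the Jacobian factor $\sqrt{1+\frac14|z|^2}$ cancelling in the ratio exactly as you observe. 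The disintegration step you flag as the main obstacle is resolved in the paper by precisely this elementary coordinate computation, which requires no regularity of $\partial\Omega$.
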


This theorem says, in particular, that if $r^2 \lambda_\Omega^{\rm H}$ is small, then one of the hyperplane sections $\Omega\cap H(p)$, $p\in\Omega$, contains a large fraction of a ball with radius $r$ in this hyperplane.

Theorem \ref{mainheisen} is proved in Section \ref{sec:heisen}.


\section{The technical lemma}

The strategy for proving our main results (Theorems \ref{main}, \ref{mainPolyharmonic} and \ref{mainheisen}) is based on Hardy inequalities of a form reminiscent of the one by Davies \cite{Da84}. In order to convert these Hardy inequalities into eigenvalue bounds we need pointwise lower bounds on the corresponding Hardy weights. In the Euclidean setting of Theorems \ref{main} and \ref{mainPolyharmonic} such pointwise lower bounds are the topic of the present section. A similar strategy is used in the Heisenberg setting in Section \ref{sec:heisen}. The method that we are using is a refinement of that in \cite{FL21}.

Let $\Omega\subset\R^d$ be an open set and set
\begin{align}
	\label{eq:defdeltaomega}
	& \delta_\omega(x) := \inf\{ |s| :\ x+s\omega\not\in\Omega \} & & \text{for all}\ x\in\Omega \,,\ \omega\in\Sph^{d-1} \,,\\
	\label{eq:defpsirx}
	& \psi_r(x) := \frac{|\Omega\cap B_r(x)|}{|B_r(x)|} & & \text{for all}\ x\in\Omega \,,\ r>0 \,.
\end{align}

\begin{lemma}\label{lemma}
	Let $\Omega\subset\R^d$ be open and $\alpha>0$. Then, for any $x\in\Omega$ and any $r>0$,
	\begin{equation}
		\label{eq:lemma1}
		|\Sph^{d-1}|^{-1} \int_{\Sph^{d-1}} \frac{d\omega}{\delta_\omega(x)^\alpha}
		\geq \frac{1}{r^\alpha} \left( \left( \psi_r(x) \right)^{-\alpha/d} - 1 \right)
	\end{equation}
	and
	\begin{equation}
		\label{eq:lemma2}
		|\Sph^{d-1}|^{-1} \int_{\Sph^{d-1}} \frac{d\omega}{\delta_\omega(x)^\alpha}
		\geq \frac{1}{r^\alpha} \left( \frac{d+\alpha}{\alpha} \right)^{\alpha/d} \frac{d+\alpha}{d} \left( 1 - \psi_r(x) \right).
	\end{equation}
	Moreover, for all $\ell\geq 0$,
	\begin{equation}
		\label{eq:lemma3}
		|\Sph^{d-1}|^{-1} \int_{\Sph^{d-1}} \frac{d\omega}{( \delta_\omega(x) + \ell)^{\alpha}} 
		\geq \frac{1}{(r+\ell)^\alpha} \left( 1 - \psi_r(x) \right).
	\end{equation}
\end{lemma}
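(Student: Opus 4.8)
The plan is to separate a geometric input from three elementary analytic inequalities. \emph{The geometric step} is the pointwise (in $x$) bound, valid for every $r>0$,
\[
 \psi_r(x) \ \geq\ |\Sph^{d-1}|^{-1} \int_{\Sph^{d-1}} \min\!\Big(1,\tfrac{\delta_\omega(x)}{r}\Big)^{d}\,d\omega \,.
\]
To prove it, I would write $|\Omega\cap B_r(x)|$ in polar coordinates centred at $x$ and note that, for each $\omega\in\Sph^{d-1}$, the open segment $\{x+\rho\omega:0\le\rho<\delta_\omega(x)\}$ lies in $\Omega$; hence $\int_0^r \1_\Omega(x+\rho\omega)\,\rho^{d-1}\,d\rho\ge\int_0^{\min(r,\delta_\omega(x))}\rho^{d-1}\,d\rho=\tfrac1d\min(r,\delta_\omega(x))^d$. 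Integrating over $\Sph^{d-1}$ and dividing by $|B_r(x)|=\tfrac1d|\Sph^{d-1}|r^d$ gives the claim. The only subtlety is that $\Omega$ need not be star-shaped with respect to $x$, so I must be content with this \emph{one-sided} estimate coming from the innermost radial segment; this is enough because we only need a lower bound on $\psi_r(x)$.

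Writing $\langle f\rangle:=|\Sph^{d-1}|^{-1}\int_{\Sph^{d-1}}f\,d\omega$ for the spherical average, substituting $g_\omega:=\delta_\omega(x)/r$ and setting $m_\omega:=\min(1,g_\omega)$, the geometric step reads $\psi_r(x)\ge\langle m^d\rangle$, and (factoring out the scaling $\langle\delta_\omega^{-\alpha}\rangle=r^{-\alpha}\langle g_\omega^{-\alpha}\rangle$) the claims \eqref{eq:lemma1}–\eqref{eq:lemma2} become inequalities for the single function $g$ on the sphere. Since each right-hand side is a decreasing function of $\psi_r(x)$ and $\psi_r(x)\ge\langle m^d\rangle$, it suffices in every case to prove the bound with $\psi_r(x)$ replaced by $\langle m^d\rangle$.

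For \eqref{eq:lemma1} I would combine two facts: Jensen's inequality for the convex function $s\mapsto s^{-\alpha/d}$ gives $\langle m^d\rangle^{-\alpha/d}\le\langle m^{-\alpha}\rangle$, while the elementary pointwise bound $m^{-\alpha}\le g^{-\alpha}+1$ holds (trivially on $\{g\le1\}$, where $m=g$, and on $\{g>1\}$, where $m=1$). Together these yield $\langle g^{-\alpha}\rangle\ge\langle m^d\rangle^{-\alpha/d}-1$, which is \eqref{eq:lemma1} after undoing the substitution. For \eqref{eq:lemma2}, since $1-\langle m^d\rangle=\langle(1-g^d)_+\rangle$, it suffices to prove the \emph{pointwise} inequality $g^{-\alpha}\ge C_{\alpha,d}\,(1-g^d)_+$ for all $g>0$ with $C_{\alpha,d}=(\tfrac{d+\alpha}{\alpha})^{\alpha/d}\tfrac{d+\alpha}{d}$; this is vacuous for $g\ge1$, and for $g\in(0,1)$ it is a one-variable minimization: the function $g\mapsto g^{-\alpha}+C_{\alpha,d}\,g^d$ has its unique critical point at $g_*=(\tfrac{\alpha}{d+\alpha})^{1/d}$, where one checks $g_*^{-\alpha}=C_{\alpha,d}(1-g_*^d)$, so $C_{\alpha,d}$ is exactly the largest admissible constant. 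Finally \eqref{eq:lemma3} is the easiest: for $\delta_\omega(x)\ge r$ the factor $(1-(\delta_\omega(x)/r)^d)_+$ vanishes, while for $\delta_\omega(x)<r$ one has $(\delta_\omega(x)+\ell)^{-\alpha}\ge(r+\ell)^{-\alpha}\ge(r+\ell)^{-\alpha}(1-(\delta_\omega(x)/r)^d)$; averaging and using $1-\psi_r(x)\le\langle(1-g^d)_+\rangle$ finishes it.

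I expect the main obstacle to be the geometric step — specifically, recognizing that the symmetric quantity $\delta_\omega(x)$ (a minimum over the two directions $\pm\omega$) is exactly what controls the innermost radial segment in both orientations, and handling measurability together with the degenerate case $\delta_\omega(x)=+\infty$ via the conventions $(+\infty)^{-\alpha}=0$ and $\min(1,+\infty)=1$. The computational heart is the sharp pointwise inequality behind \eqref{eq:lemma2}: once the critical point $g_*$ is identified, verifying equality there both establishes the bound and shows that the constant $C_{\alpha,d}$ cannot be improved.
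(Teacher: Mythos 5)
Your proof is correct. The geometric step and the treatment of \eqref{eq:lemma2} and \eqref{eq:lemma3} coincide with the paper's argument: the bound $\psi_r(x)\geq |\Sph^{d-1}|^{-1}\int_{\Sph^{d-1}}\min\{1,(\delta_\omega(x)/r)^d\}\,d\omega$ is exactly what the paper derives from $\1(x+\rho\omega\in\Omega)\geq \1(\delta_\omega(x)>\rho)$ in polar coordinates; your sharp pointwise inequality $g^{-\alpha}\geq C_{\alpha,d}\,(1-g^d)_+$ with critical point $g_*=(\alpha/(d+\alpha))^{1/d}$ is precisely the paper's elementary inequality \eqref{eq:elementary} with $X=g^d$ and $\beta=\alpha/d$ (which the paper states without proof, so your calculus verification and sharpness check are a welcome addition); and your two-case argument for \eqref{eq:lemma3} is the same monotonicity observation that the paper phrases as $\min\{1,(r/\delta_\omega(x))^\alpha\}\leq \bigl((r+\ell)/(\delta_\omega(x)+\ell)\bigr)^\alpha$. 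Where you genuinely diverge is \eqref{eq:lemma1}. The paper passes, via the layer cake formula, to the distribution function $s(t)$ of $r/\delta_\omega(x)$ on the sphere, reduces the claim to the functional inequality \eqref{eq:functionineqality} valid for arbitrary functions $0\leq s\leq 1$, and proves that inequality by the bathtub principle. You instead combine Jensen's inequality for the convex map $s\mapsto s^{-\alpha/d}$, giving $\bigl(|\Sph^{d-1}|^{-1}\int_{\Sph^{d-1}} m_\omega^d\,d\omega\bigr)^{-\alpha/d}\leq |\Sph^{d-1}|^{-1}\int_{\Sph^{d-1}} m_\omega^{-\alpha}\,d\omega$ for $m_\omega=\min\{1,\delta_\omega(x)/r\}$, with the trivial pointwise bound $m_\omega^{-\alpha}\leq (\delta_\omega(x)/r)^{-\alpha}+1$; together with the monotonicity of $s\mapsto s^{-\alpha/d}$ this yields \eqref{eq:lemma1} with the identical constant. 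Your route is shorter and avoids rearrangement machinery entirely. What the paper's formulation buys is reusability: the functional inequality \eqref{eq:functionineqality} is invoked again, verbatim, in the Heisenberg setting (proof of Theorem \ref{mainheisen}, with $\alpha=2$ and $d=2N$), whereas your argument would have to be re-run there --- though it would work just as well, since the geometric input \eqref{eq:lemmaheisenproof} has exactly the same form.
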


\begin{proof}
	\emph{Step 1.} We begin by proving \eqref{eq:lemma1}.
	
	By the layer cake formula, we can write
	$$
	r^\alpha |\Sph^{d-1}|^{-1} \int_{\Sph^{d-1}} \frac{d\omega}{\delta_\omega(x)^\alpha} = \int_0^\infty s(\tau^{1/\alpha})\,d\tau = \alpha \int_0^\infty s(t)\, t^{\alpha-1}\,dt
	$$
	with
	$$
	s(t) := \frac{|\{ \omega\in\Sph^{d-1}:\ r/ \delta_\omega(x) \geq t \}|}{|\Sph^{d-1}|} \,.
	$$
	(We consider $x$ and $r$ as fixed and do not reflect their dependence in our notation.)
	Meanwhile, we have, since $\1(x+\rho\omega\in\Omega)\geq\1(\delta_\omega(x)>\rho)$,
	\begin{align*}
		\psi_r(x) & = \frac{\int_{\Sph^{d-1}} \int_0^r \1(x+\rho\omega\in\Omega) \rho^{d-1}\,d\rho\,d\omega}{\int_{\Sph^{d-1}} \int_0^r \rho^{d-1}\,d\rho\,d\omega} \\
		& \geq \frac{\int_{\Sph^{d-1}} \int_0^r \1(\delta_\omega(x)>\rho) \rho^{d-1}\,d\rho\,d\omega}{\int_{\Sph^{d-1}} \int_0^r \rho^{d-1}\,d\rho\,d\omega} \\
		& = |\Sph^{d-1}|^{-1} \int_{\Sph^{d-1}} \min\{1 , (\delta_\omega(x)/r)^d\}\,d\omega \\
		& = d \int_0^1 \frac{|\{\omega\in\Sph^{d-1}:\ \delta_\omega(x)/r > \sigma \}|}{|\Sph^{d-1}|} \, \sigma^{d-1}\,d\sigma \\
		& = d \int_0^1 (1-s(1/\sigma)) \,\sigma^{d-1}\,d\sigma \\
		& = 1 - d \int_0^1 s(1/\sigma ) \,\sigma^{d-1}\,d\sigma \\ 
		& = 1 - d \int_1^\infty s(t) \, t^{-d-1}\,dt \,.
	\end{align*}
	Here we used again the layer cake formula, together with a change of variables.
	
	As a consequence of these expressions, we see that the claimed inequality will follow, once we have shown
	\begin{equation}
		\label{eq:functionineqality}
			\alpha \int_0^\infty s(t)\,t^{\alpha-1}\,dt \geq \left( 1 - d \int_1^\infty s(t) \, t^{-d-1}\,dt \right)^{-\alpha/d} - 1 \,.
	\end{equation}
	We will show that this inequality holds for any function $s$ on $\R_+$ satisfying $0\leq s\leq 1$. Since our function $s$ satisfies these properties, we will have verified \eqref{eq:lemma1}. (Note also that our function $s$ is nonincreasing, but we do not use this property.)
	
	We bound the left side from below by
	$$
	\alpha \int_0^\infty s(t)\,t^{\alpha-1} \,dt \geq \alpha \int_1^\infty s(t)\,t^{\alpha-1}\,dt =: M \,.
	$$
	Meanwhile, by the bathtub principle, since $t^{-d-\alpha}$ is decreasing on $(1,\infty)$, the supremum of $\int_1^\infty s(t)\, t^{-d-1}\,dt$ over all functions $s$ on $(1,\infty)$ with $0\leq s\leq 1$ and integral (against $\alpha t^{\alpha-1}dt$) equal to $M$ is attained at $\1(1<\tau<(M+1)^{1/\alpha})$. Thus,
	$$
	d \int_1^\infty s(t) \,t^{-d-1}\,dt \leq d \int_1^{(M+1)^{1/\alpha}} t^{-d-1}\,dt = 1-(M+1)^{-d/\alpha}
	$$
	and the right side above is bounded from above by
	$$
	\left( 1 - d \int_1^\infty s(t) \, t^{-d-1}\,dt \right)^{-\alpha/d} - 1
	\leq \left( 1 - (1-(M+1)^{-d/\alpha} ) \right)^{-\alpha/d} - 1 = M \,.
	$$
	This proves the claimed inequality.
	
	\medspace
	
	\emph{Step 2.} We prove the bounds \eqref{eq:lemma2} and \eqref{eq:lemma3}.
	
	We could pass to an inequality for the function $s$ as in Step 1, but it is slightly easier to work directly with the function $\delta_\omega$. (The two approaches lead to the same constant, as one can check.) The proof in Step 1 shows that
	\begin{align*}
		1 - \psi_r(x) & \leq |\Sph^{d-1}|^{-1} \int_{\Sph^{d-1}} \left( 1- \min\{1,(\delta_\omega(x)/r)^d\}\right)d\omega \\
		& = |\Sph^{d-1}|^{-1} \int_{\Sph^{d-1}} \left( 1- (\delta_\omega(x)/r)^d\right)_+ d\omega \,.
	\end{align*}
	We now apply the elementary inequality
	\begin{equation}
		\label{eq:elementary}
			(1-X)_+ \leq \frac{\beta^\beta}{(\beta+1)^{\beta+1}} \ X^{-\beta}
			\qquad\text{for all}\ X>0 \,,\ \beta>0 \,,
	\end{equation}
	with $X= (\delta_\omega(x)/r)^d$ and $\beta=\alpha/d$ and obtain the claimed bound \eqref{eq:lemma2}. 
	
	To prove \eqref{eq:lemma3} we combine the elementary inequality \eqref{eq:elementary} with the trivial bound $(1-X)_+\leq 1$ and obtain
	\begin{align*}
		1 - \psi_r(x) & \leq |\Sph^{d-1}|^{-1} \int_{\Sph^{d-1}} \min\left\{1, \left( \frac{\alpha}{\alpha+d} \right)^{\alpha/d} \frac{d}{\alpha+d} \left( \frac{r}{\delta_\omega(x)} \right)^\alpha \right\} d\omega \\
		& \leq |\Sph^{d-1}|^{-1} \int_{\Sph^{d-1}} \min\left\{1, \left( \frac{r}{\delta_\omega(x)} \right)^\alpha \right\} d\omega \\
		& \leq |\Sph^{d-1}|^{-1} \int_{\Sph^{d-1}} \left( \frac{r+\ell}{\delta_\omega(x)+\ell} \right)^\alpha d\omega \,.
	\end{align*}
	This concludes the proof of \eqref{eq:lemma3}.
\end{proof}



\section{Proof of lower bounds for the Robin Laplacian}\label{sec:robin}

Our goal in this section is to prove Theorem \ref{main}. We recall that for every open set $\Omega\subset\R^d$ we defined $\delta_\omega$ in \eqref{eq:defdeltaomega}. For $\sigma>0$ we set
$$
\mu_\sigma(x):=d\left|\mathbb{S}^{d-1}\right|^{-1} \int_{\mathbb{S}^{d-1}}\left(\delta_\omega(x)+
\frac{1}{2 \sigma}\right)^{-2} d \omega
\qquad\text{for all}\ x\in\Omega \,.
$$
The following is a special case of the result in \cite{KL12}, where even the case of a non-constant Robin function is treated.

\begin{prop}\label{Hardyinequality}\cite{KL12}
	Let $\Omega\subset\R^d$ be open with uniformly Lipschitz continuous boundary and let $\sigma>0$. Then, for all $u\in H^1(\Omega)$,
	$$
	\int_{\Omega}|\nabla u(x)|^2 \,d x+\sigma \int_{\partial \Omega} |u(y)|^2 \,d S(y) \geq \frac{1}{4} 
	\int_{\Omega}|u(x)|^2 \mu_\sigma(x) \,d x .
	$$
\end{prop}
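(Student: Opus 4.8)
The plan is to derive the inequality from a one-dimensional Robin--Hardy inequality along lines, then average over all directions. This is the Robin counterpart of the clean Dirichlet argument: in the limit $\sigma\to\infty$ the boundary term disappears and one recovers exactly the averaged Hardy inequality $\int_\Omega|\nabla u|^2\geq\frac14\int_\Omega\mu_0|u|^2$ that underlies the rest of the paper, so the only genuinely new feature here is the surface term.

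First I would establish the one-dimensional building block. On an open interval $I\subset\R$, writing $\delta(s):=\dist(s,\partial I)$ and $c:=\tfrac1{2\sigma}$, the claim is
$$
\int_I |f'(s)|^2\,ds + \sigma\sum_{p\in\partial I} |f(p)|^2 \geq \tfrac14\int_I \frac{|f(s)|^2}{(\delta(s)+c)^2}\,ds \,.
$$
To prove it I would use the ground state $\phi(s):=(\delta(s)+c)^{1/2}$, which satisfies $-\phi''=\tfrac1{4(\delta+c)^2}\phi$ on each half of $I$ (since $|\delta'|=1$) and has a downward kink at the midpoint contributing a nonnegative distributional term to $-\phi''$. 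Substituting $f=\phi h$ and integrating by parts, the bulk produces exactly the stated weight, the term $\int_I \phi^2|h'|^2$ is nonnegative, and the boundary terms at $\partial I$ appear with coefficient $\phi'/\phi=\tfrac1{2c}=\sigma$ (with the outward sign), which is precisely the Robin coefficient.

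To lift this to $\R^d$ I would use $|\nabla u(x)|^2=\tfrac{d}{|\Sph^{d-1}|}\int_{\Sph^{d-1}}|\omega\cdot\nabla u(x)|^2\,d\omega$ together with Fubini: for each fixed $\omega$, slicing $\Omega$ by lines parallel to $\omega$ and applying the one-dimensional inequality on each section (whose endpoints lie on $\partial\Omega$ and along which $\delta$ restricts to $\delta_\omega$) gives
$$
\int_\Omega |\omega\cdot\nabla u|^2\,dx + \sigma\int_{\partial\Omega} |u|^2\,|\omega\cdot\nu|\,dS \geq \tfrac14\int_\Omega \frac{|u|^2}{(\delta_\omega+c)^2}\,dx \,,
$$
the Jacobian $|\omega\cdot\nu|$ arising from projecting the surface measure onto $\omega^\perp$. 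Averaging over $\omega$ with weight $\tfrac{d}{|\Sph^{d-1}|}$ converts the first term into $\int_\Omega|\nabla u|^2$ and the right-hand side into $\tfrac14\int_\Omega\mu_\sigma|u|^2$, exactly as desired.

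The hard part is the boundary term. The averaging just described turns the surface contribution into $\sigma\,\tfrac{d}{|\Sph^{d-1}|}\int_{\Sph^{d-1}}|\omega\cdot\nu|\,d\omega$ times $\int_{\partial\Omega}|u|^2\,dS$, and the dimensional constant $\tfrac{d}{|\Sph^{d-1}|}\int_{\Sph^{d-1}}|\omega\cdot\nu|\,d\omega$ is strictly larger than $1$ for $d\geq2$ (it equals $4/\pi$ when $d=2$). Thus naive direction-averaging yields a strictly weaker inequality, with coefficient $\kappa_d\sigma$ rather than $\sigma$, because the surplus of $\mu_\sigma$ over the normal-direction weight $(\dist(\cdot,\partial\Omega)+c)^{-2}$ concentrated near $\partial\Omega$ is absorbed only globally, not pointwise. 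I expect this to be the main obstacle. The route I would take to secure the sharp coefficient $\sigma$ is to recast the statement via the ground-state (supersolution) identity
$$
\int_\Omega|\nabla u|^2 + \sigma\int_{\partial\Omega}|u|^2\,dS = \int_\Omega\phi^2\bigl|\nabla(u/\phi)\bigr|^2 + \int_\Omega\Bigl(-\tfrac{\Delta\phi}{\phi}\Bigr)|u|^2 + \int_{\partial\Omega}\Bigl(\tfrac{\partial_\nu\phi}{\phi}+\sigma\Bigr)|u|^2\,dS \,,
$$
so that it suffices to produce $\phi>0$ on $\overline\Omega$ with $-\Delta\phi\geq\tfrac14\mu_\sigma\,\phi$ in $\Omega$ and $\partial_\nu\phi+\sigma\phi\geq0$ on $\partial\Omega$; dropping the nonnegative first term then gives the inequality with the correct boundary coefficient. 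The calibrating model is the half-space, where $\delta_\omega(x)=\dist(x,\partial\Omega)/|\omega\cdot\nu|$ and in the Dirichlet limit $\mu_0$ reduces exactly to the normal-direction weight (the sharp Hardy inequality), while for finite $\sigma$ the extra curvature of $\phi$ needed near $\partial\Omega$ to dominate $\tfrac14\mu_\sigma$ is the substantive estimate; carrying out this supersolution construction for general uniformly Lipschitz $\Omega$ is the content of \cite{KL12}.
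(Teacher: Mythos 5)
You have correctly identified where the difficulty lies, but your proposal does not close it; and in fact it cannot be closed in the form you (and the statement) set it up. Note first that the paper itself offers no proof of this proposition: it is quoted verbatim as a special case of \cite{KL12}, so the only question is whether your sketch is a self-contained argument. It is not. Your one-dimensional Robin--Hardy lemma and the slicing computation (including the Jacobian $|\omega\cdot\nu|$ on the surface term) are correct, and you correctly isolate the crux: direction-averaging produces the boundary coefficient $\kappa_d\sigma$ with $\kappa_d=d|\Sph^{d-1}|^{-1}\int_{\Sph^{d-1}}|\omega\cdot\nu|\,d\omega>1$. But the step meant to repair this --- a supersolution $\phi>0$ with $-\Delta\phi\geq\tfrac14\mu_\sigma\phi$ in $\Omega$ and $\partial_\nu\phi+\sigma\phi\geq0$ on $\partial\Omega$ --- is never constructed; you delegate precisely this ``substantive estimate'' to \cite{KL12}. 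That is a citation, not a proof.

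The deeper problem is that no such supersolution can exist, because the inequality with the weight $\mu_\sigma(x)=d|\Sph^{d-1}|^{-1}\int_{\Sph^{d-1}}(\delta_\omega(x)+\tfrac{1}{2\sigma})^{-2}\,d\omega$ (factor $d$ \emph{and} $\omega$-independent shift $\tfrac1{2\sigma}$) fails already for the half-space $\Omega=\{x_d>0\}$, where $\delta_\omega(x)=x_d/|\omega_d|$. Testing with $u(x)=f(x_d)\chi(x'/L)$, $L\to\infty$, reduces the claim to a one-dimensional inequality; substituting $f=\phi h$ with $\phi(t)=(t+\tfrac1{2\sigma})^{1/2}$ makes the boundary terms cancel exactly and leaves
\[
\int_0^\infty|f'|^2\,dt+\sigma|f(0)|^2-\frac14\int_0^\infty\mu_\sigma|f|^2\,dt
=\int_0^\infty\phi^2|h'|^2\,dt-\frac14\int_0^\infty\Bigl(\mu_\sigma(t)-\bigl(t+\tfrac1{2\sigma}\bigr)^{-2}\Bigr)|f|^2\,dt\,,
\]
and since $\mu_\sigma(t)>(t+\tfrac1{2\sigma})^{-2}$ pointwise with $\int_0^\infty\bigl(\mu_\sigma(t)-(t+\tfrac1{2\sigma})^{-2}\bigr)\,(t+\tfrac1{2\sigma})\,dt$ finite and positive, a logarithmic cutoff $h$ drives the right side negative. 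The correct mechanism --- and, as far as I can reconstruct it, the actual content of \cite{KL12} --- is that on the slice in direction $\omega$ one may only spend the Robin coefficient $\sigma|\omega\cdot\nu|$ at the hitting point ($\nu$ the outward normal there), so the shift becomes $\tfrac{1}{2\sigma|\omega\cdot\nu|}$; the boundary terms then average back to exactly $\sigma\int_{\partial\Omega}|u|^2\,dS$ because $d|\Sph^{d-1}|^{-1}\int_{\Sph^{d-1}}(\omega\cdot\nu)^2\,d\omega=1$, which is precisely the identity your scheme was missing, and for the half-space the corrected weight is the sharp $(x_d+\tfrac1{2\sigma})^{-2}$. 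What your argument does prove without any extra input is the proposition with $\mu_\sigma$ replaced by $\mu_\sigma/d$ (average with the probability measure on the sphere and bound $|\omega\cdot\nu|\le1$). So the gap in your proposal is real, and it also flags a discrepancy worth raising: the statement as printed, with both the factor $d$ and the shift $\tfrac1{2\sigma}$, is stronger than what the slicing method of \cite{KL12} can deliver, and the half-space computation above indicates it needs the angular correction (with corresponding adjustments to the constant in Theorem \ref{main}).
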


\begin{proof}[Proof of Theorem \ref{main}] 
	According to Lemma \ref{lemma} with $\alpha=2$ and $\ell=(2\sigma)^{-1}$, for any $r>0$ we have the pointwise bound
	$$
	\mu_\sigma(x) \geq d \left( r+ \frac1{2\sigma} \right)^{-2} \left( 1-\psi_r(x) \right).
	$$
	Taking the infimum over $x\in\Omega$ gives
	$$
	\mu_\sigma(x) \geq d \left( r+ \frac1{2\sigma} \right)^{-2} \left( 1-\Psi_r \right).
	$$
	Inserting this bound into the right side of the inequality in Proposition \ref{Hardyinequality} gives the claimed lower bound on $\lambda^{{\rm R} \sigma}_\Omega$.
\end{proof}


\section{Proof of lower bounds for the polyharmonic operator}\label{sec:polyharm}

Our goal in this section is to prove Theorem \ref{mainPolyharmonic}. Our starting point is once more a Hardy inequality, this time for the polyharmonic operator due to Owen \cite{Owen99}. It generalizes Davies's Hardy inequality \cite{Da84}. We set
$$
M_\Omega^{(m)}(x) := \frac{1}{\left|\mathbb{S}^{d-1}\right|} \int_{\mathbb{S}^{d-1}} \frac{d\omega}{\delta_\omega(x)^{2 m}}
\qquad\text{for all}\ x\in\Omega
$$
and recall the definition of the constant $C_{m,d}$ from \eqref{eq:owenconst}.

\begin{proposition}\label{hardyowen}\cite{Owen99}
	Let $\Omega\subset\R^d$ be open and let $m\in\mathbb N$. Then, for all $u\in H^m_0(\Omega)$,
	$$
	h_\Omega^{(m)}[u] \geq C_{m,d} \int_\Omega |u(x)|^2 M_\Omega^{(m)}(x)\,dx \,.
	$$
\end{proposition}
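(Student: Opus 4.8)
The plan is to combine a spherical averaging identity for the form $h_\Omega^{(m)}$ with a sharp one-dimensional Hardy inequality applied along lines, in the spirit of Davies's argument for $m=1$. It suffices to prove the inequality for $u\in C_c^\infty(\Omega)$ and then pass to $u\in H_0^m(\Omega)$ by density, since $h_\Omega^{(m)}$ is continuous in the $H^m$-norm while the right-hand side is controlled from below by Fatou's lemma. For $\omega\in\Sph^{d-1}$ I write $\partial_\omega u=\omega\cdot\nabla u$ for the directional derivative.

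First I would establish the identity
\[
h_\Omega^{(m)}[u] = \frac{\prod_{j=0}^{m-1}(d+2j)}{|\Sph^{d-1}|\,(2m-1)!!}\int_{\Sph^{d-1}}\int_\Omega |\partial_\omega^m u(x)|^2\,dx\,d\omega \,.
\]
Extending $u$ by zero to $\R^d$ and using Plancherel, one has $\int_\Omega|\partial_\omega^m u|^2\,dx=\int_{\R^d}(\omega\cdot\xi)^{2m}|\hat u(\xi)|^2\,d\xi$; integrating in $\omega$ produces the factor $\int_{\Sph^{d-1}}(\omega\cdot\xi)^{2m}\,d\omega=|\xi|^{2m}\gamma_{m,d}$, where $\gamma_{m,d}=\int_{\Sph^{d-1}}(\omega\cdot e)^{2m}\,d\omega$ is independent of the unit vector $e$. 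Since $h_\Omega^{(m)}[u]=\int_{\R^d}|\xi|^{2m}|\hat u(\xi)|^2\,d\xi$ in both the even and odd cases, the identity follows upon computing the moment $\gamma_{m,d}=|\Sph^{d-1}|\,(2m-1)!!/\prod_{j=0}^{m-1}(d+2j)$.

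Second, for fixed $\omega$ I would reduce to one dimension by Fubini along the foliation $x=y+t\omega$, $y\in\omega^\perp$. On each connected component $(a,b)$ of the line section $\{t:y+t\omega\in\Omega\}$ the slice $t\mapsto u(y+t\omega)$ lies in $C_c^\infty(a,b)$, and $\delta_\omega(y+t\omega)=\min\{t-a,\,b-t\}$, so the estimate I need on each line is the sharp one-dimensional inequality
\[
\int_a^b |g^{(m)}(t)|^2\,dt \geq \frac{[(2m-1)!!]^2}{4^m}\int_a^b \frac{|g(t)|^2}{\min\{t-a,\,b-t\}^{2m}}\,dt\,, \qquad g\in C_c^\infty(a,b)\,.
\]
Integrating this over $y\in\omega^\perp$ and then over $\omega\in\Sph^{d-1}$, inserting the averaging identity on the left and recognizing $M_\Omega^{(m)}(x)=|\Sph^{d-1}|^{-1}\int_{\Sph^{d-1}}\delta_\omega(x)^{-2m}\,d\omega$ on the right, a bookkeeping of constants produces exactly the factor $C_{m,d}=\prod_{j=0}^{m-1}(d+2j)\cdot(2m-1)!!/4^m$, the two factorials combining as $[(2m-1)!!]^2/(2m-1)!!=(2m-1)!!$.

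The crux, and the step I expect to be the main obstacle, is the sharp one-dimensional inequality above, in particular recovering the full constant $[(2m-1)!!]^2/4^m$ for the two-sided, distance-to-the-nearer-endpoint weight rather than half of it: naively splitting at the midpoint and adding two one-sided estimates would cost a factor $2$. Instead I would prove the one-sided inequality $\int_0^L|g^{(m)}|^2\,dt\geq([(2m-1)!!]^2/4^m)\int_0^L|g|^2\,t^{-2m}\,dt$ assuming vanishing to order $m-1$ at $t=0$ \emph{only}, with no condition at $t=L$; this one-condition form is precisely what makes the midpoint split lossless, since on $(a,(a+b)/2)$ one anchors at $a$ and on $((a+b)/2,b)$ at $b$. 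To obtain it I would iterate the weighted first-order Hardy inequality
\[
\int_0^L t^{\alpha}|h'(t)|^2\,dt \geq \Big(\tfrac{\alpha-1}{2}\Big)^2\int_0^L t^{\alpha-2}|h(t)|^2\,dt
\]
with $h=g^{(m-k)}$ and $\alpha=-2(k-1)$ for $k=1,\dots,m$, whereupon the constants $\big(\tfrac{2k-1}{2}\big)^2$ multiply to $[(2m-1)!!]^2/4^m$. The decisive point is that for $\alpha<1$ the boundary term at $t=L$ generated by integration by parts has a favorable (nonpositive) sign and may be discarded, while the vanishing of $g$ to order $m-1$ at $0$ annihilates the boundary term there at every level; this is exactly what permits the right endpoint to remain free.
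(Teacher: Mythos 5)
Your proof is correct: the Plancherel averaging identity with the moment $\int_{\mathbb{S}^{d-1}}(\omega\cdot e)^{2m}\,d\omega = |\mathbb{S}^{d-1}|\,(2m-1)!!\,\big/\prod_{j=0}^{m-1}(d+2j)$, the identification $\delta_\omega(y+t\omega)=\min\{t-a,\,b-t\}$ on each line component, the lossless midpoint split made possible by the one-sided $m$-th order Hardy inequality (whose iterated proof, with the nonpositive boundary term at the free endpoint discarded, is sound), and the final bookkeeping $\prod_{j=0}^{m-1}(d+2j)\cdot(2m-1)!!/4^m=C_{m,d}$ all check out. Note that the paper itself gives no proof of this proposition—it is quoted from Owen \cite{Owen99}—and your argument is essentially a reconstruction of the standard Davies--Owen route (spherical averaging via the Fourier transform combined with one-dimensional weighted Hardy inequalities along lines), so it coincides in substance with the proof underlying the cited source.
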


\begin{proof}[Proof of Theorem \ref{mainPolyharmonic}]
	According to Lemma \ref{lemma} with $\alpha=2m$, for any $r>0$ we have the pointwise bounds
	$$
	M_\Omega^{(m)}(x) \geq \frac1{r^{2m}} \left( (\psi_r(x))^{-\frac{2m}d}-1 \right)
	$$
	and, with $c_{d,m}$ defined in \eqref{eq:polyharmconst},
	$$
	M_\Omega^{(m)}(x) \geq c_{d,m} \frac1{r^{2m}} \left( 1- \psi_r(x) \right).
	$$
	Taking the infimum over $x\in\Omega$ gives
	$$
	M_\Omega^{(m)}(x) \geq \frac1{r^{2m}} \left( \Psi_r^{-\frac{2m}d}-1 \right)
	$$
	and
	$$
	M_\Omega^{(m)}(x) \geq c_{d,m} \frac1{r^{2m}} \left( 1 - \Psi_r \right).
	$$
	Inserting these bounds into the right side of the inequality in Proposition \ref{hardyowen} gives the claimed lower bound on $\lambda_\Omega^{(m)}$.
\end{proof}


\section{Proof of lower bounds on the Heisenberg group}\label{sec:heisen}

Our goal in this section is to prove Theorem \ref{mainheisen}. We proceed again by combining a Hardy inequality with an elementary lemma similar to \eqref{lemma}.

To state the relevant Hardy inequality, we recall the notation of the Heisenberg group from Subsection \ref{sec:models}. Let $\Omega\subset\Hei^N$ be an open set. In order to define the Davies--Hardy distance, for $p\in\Omega$ and $\omega\in\Sph^{2N-1}$ we set
$$
\tilde\delta_\omega(p) := \inf\{ |s| :\ p \circ (s\omega,0) \not\in\Omega \}
$$
and
$$
\tilde\delta(p) := \left( 2N |\Sph^{2N-1}|^{-1} \int_{\Sph^{2N-1}} \tilde\delta_\omega(p)^{-2}\,d\omega \right)^{-1/2} \,.
$$
The dependence on the set $\Omega$ is not reflected in these notations.

\begin{prop}\label{hardyheisenberg}
	Let $\Omega\subset\Hei^N$ be an open set. Then for all $u\in S^1_0(\Omega)$
	$$
	\int_\Omega \sum_{n=1}^{2N} |(Z_n u)(p)|^2\,dp \geq \frac14 \int_\Omega \frac{|u(p)|^2}{\tilde\delta(p)^2}\,dp \,.
	$$
\end{prop}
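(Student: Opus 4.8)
The plan is to reduce the inequality to a one--dimensional Hardy inequality along the horizontal ``straight lines'' $s\mapsto p\circ(s\omega,0)$ and then to average over the directions $\omega\in\Sph^{2N-1}$. First I would record two averaging identities. For any $v\in\R^{2N}$ one has $\int_{\Sph^{2N-1}}|\omega\cdot v|^2\,d\omega=(2N)^{-1}|\Sph^{2N-1}|\,|v|^2$, so applying this with $v=(Z_1u(p),\dots,Z_{2N}u(p))$ gives the pointwise identity
\begin{equation*}
	\sum_{n=1}^{2N}|(Z_nu)(p)|^2 = \frac{2N}{|\Sph^{2N-1}|}\int_{\Sph^{2N-1}}\Big|\sum_{n=1}^{2N}\omega_n(Z_nu)(p)\Big|^2\,d\omega \,.
\end{equation*}
On the other hand, by definition $\tilde\delta(p)^{-2}=2N|\Sph^{2N-1}|^{-1}\int_{\Sph^{2N-1}}\tilde\delta_\omega(p)^{-2}\,d\omega$. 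Since both sides of the asserted inequality then carry the common factor $2N|\Sph^{2N-1}|^{-1}$, by Fubini it suffices to prove, for each fixed $\omega\in\Sph^{2N-1}$, the directional inequality
\begin{equation}\label{eq:dir}
	\int_\Omega\Big|\sum_{n=1}^{2N}\omega_n(Z_nu)(p)\Big|^2\,dp \geq \frac14\int_\Omega\frac{|u(p)|^2}{\tilde\delta_\omega(p)^2}\,dp \,.
\end{equation}

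The geometric heart of the matter is the observation that the curves $\gamma(s):=p\circ(s\omega,0)=(z+s\omega,\,t+\tfrac s2\,z\cdot J\omega)$ are precisely the integral curves of the horizontal vector field $X_\omega:=\sum_n\omega_n Z_n=\omega\cdot\nabla_z-\tfrac12(\omega\cdot Jz)\pa_t$. Indeed, a direct computation gives $\tfrac{d}{ds}\gamma(s)=(\omega,\tfrac12 z\cdot J\omega)$, and since $J$ is antisymmetric (so $\omega\cdot J\omega=0$ and $\tfrac12 z\cdot J\omega=-\tfrac12\omega\cdot Jz$) this is exactly $X_\omega$ evaluated at $\gamma(s)$; hence $\tfrac{d}{ds}u(\gamma(s))=(X_\omega u)(\gamma(s))$. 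Moreover $X_\omega$ is divergence free, because its $\pa_{z_n}$--coefficients are constant and its $\pa_t$--coefficient is independent of $t$, so its flow $\phi_s(p):=p\circ(s\omega,0)$ preserves Lebesgue measure. Choosing the transversal $\Sigma_\omega:=\{z\cdot\omega=0\}\times\R_t$, this yields the volume--preserving foliation $\int_{\R^{2N+1}}F\,dp=\int_{\Sigma_\omega}\int_\R F(\phi_s(q))\,ds\,d\nu_\omega(q)$ with $d\nu_\omega$ the induced Lebesgue measure on $\Sigma_\omega$; the Jacobian equals $1$ because the underlying change of variables is a shear.

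With this decomposition, \eqref{eq:dir} reduces to a one--dimensional statement on each leaf. For $u\in C^1_c(\Omega)$ and $q\in\Sigma_\omega$, set $g(s):=u(\phi_s(q))$ and $I:=\{s:\phi_s(q)\in\Omega\}$, an open subset of $\R$; then $g\in C^1_c(I)$ and $g'(s)=(X_\omega u)(\phi_s(q))$. Applying the classical one--dimensional Hardy inequality with sharp constant $1/4$ on each connected component of $I$ gives $\int_I|g'(s)|^2\,ds\geq\frac14\int_I |g(s)|^2\,\delta_I(s)^{-2}\,ds$, where $\delta_I(s)=\dist(s,\R\setminus I)$. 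The crucial book--keeping identity is $\delta_I(s)=\tilde\delta_\omega(\phi_s(q))$: using $\phi_\tau(\phi_s(q))=\phi_{s+\tau}(q)$ one sees that the nearest exit from $\Omega$ along the line, measured from the point $\phi_s(q)$, is exactly $\dist(s,\R\setminus I)$. Integrating over $\Sigma_\omega$ and undoing Fubini yields \eqref{eq:dir} for $u\in C^1_c(\Omega)$; the general case $u\in S^1_0(\Omega)$ follows by density, since the left-hand side is continuous in the $S^1_0$--norm while the right-hand side is lower semicontinuous along an a.e.-convergent subsequence by Fatou's lemma.

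I expect the main obstacle to be the geometric and measure--theoretic step in the second paragraph: verifying that the Heisenberg lines $p\circ(s\omega,0)$ are integral curves of $\sum_n\omega_n Z_n$ and that the associated foliation is volume preserving with unit Jacobian. This is precisely where the specific form of the vector fields $Z_n$ and the group law conspire; once it is in place, the remaining two steps are the standard one--dimensional Hardy inequality and a spherical average, entirely analogous to the Euclidean computations underlying Lemma \ref{lemma}.
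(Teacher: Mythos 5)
Your proof is correct and follows essentially the same route as the paper's: the spherical averaging identity to reduce to a fixed direction $\omega$, the unit-Jacobian shear $(q,t,s)\mapsto(q+s\omega,\,t+\tfrac{s}{2}\,q\cdot J\omega)$ (which the paper writes as an explicit Fubini/change-of-variables formula rather than via the divergence-free flow of $\sum_n\omega_n Z_n$), the one-dimensional Hardy inequality with constant $1/4$ along the Heisenberg lines, and the group-law identity $\dist(s,\R\setminus I)=\tilde\delta_\omega(\phi_s(q))$. The only material you add is the explicit density step from $C^1_c(\Omega)$ to $S^1_0(\Omega)$, which the paper leaves implicit.
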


This inequality essentially appears in \cite[Proposition 2]{PrRiSe}. There are two differences, however. First, the definition of $\tilde\delta_\omega(p)$ in \cite{PrRiSe} involves $s$ rather than $|s|$, so our $\tilde\delta_\omega(p)$ is not larger than theirs and so is our $\delta(p)$, making our inequality tighter. Second, in \cite{PrRiSe} it is assumed that the boundary of $\Omega$ is at least Lipschitz continuous, while we prove the inequality without any boundary regularity assumption whatsoever. The proof in \cite{PrRiSe} proceeds by deriving a sub-Riemannian analogue of the Santal\'o formula, which is where some boundary regularity enters the proof. In the special case of the Heisenberg group, this formula already appeared in \cite{Pa} and \cite{Mo}. Our proof is based on a much simpler change of variables formula, which is independent of any regularity considerations of the boundary of $\Omega$. It should be emphasized, however, that in \cite{PrRiSe} a much more general setting than the Heisenberg group is considered. It is not clear to us how far our method of proof can be generalized.

For a Hardy inequality on $\Hei^N$ of a different type we refer to \cite{La}.

\begin{remark}
	The paper \cite{PrRiSe} contains similar Hardy inequalities with the $p$-th power of the sub-Riemannian gradient for any $1<p<\infty$. Our proof extends to this case and, in particular, shows that these inequalities remain valid without regularity assumptions on the boundary.
\end{remark}

\begin{proof}
	We have
	$$
	\sum_{n=1}^{2N} |(Z_n u)(p)|^2 = 2N |\Sph^{2N-1}|^{-1} \int_{\Sph^{2N-1}} |\omega\cdot (Zu)(p)|^2 \,d\omega
	$$
	and consequently
	$$
	\int_\Omega \sum_{n=1}^{2N} |(Z_n u)(p)|^2\,dp = 2N |\Sph^{2N-1}|^{-1} \int_{\Sph^{2N-1}} \left( \int_\Omega |\omega\cdot (Zu)(p)|^2 \,dp \right) d\omega \,.
	$$
	We will bound the inner integral from below for any $\omega\in\Sph^{2N-1}$.
	
	So let $\omega\in\Sph^{2N-1}$ be fixed. Note that any $z\in\R^{2N}$ can be written in a unique fashion as $z=q+s\omega$ with $q\in\omega^\bot=\{ q'\in\R^{2N}:\ \omega\cdot q'=0\}$ and $s\in\R$. Moreover, the Lebesgue measure $dz$ on $\R^{2N}$ is the product measure of the induced Lebesgue measure $d\sigma(q)$ on $\omega^\bot$ and Lebesgue measure $ds$ on $\R$. By an additional linear shift in the $t$ variable, we find that, for any $f\in L^1(\Hei^N)$,
	$$
	\int_{\Hei^N} f(p)\,dp = \int_{\omega^\bot} \int_\R \int_\R f(q+s\omega,t+\frac s2 q\cdot J\omega) \,dt\,ds\,d\sigma(q) \,.
	$$
	
	In our situation, it follows that
	$$
	\int_\Omega |\omega\cdot (Zu)(p)|^2 \,dp = \int_{\omega^\bot} \int_\R \left( \int_\R |\omega\cdot(Zu)(q+s\omega,t+\frac s2 q\cdot J\omega)|^2 \,ds\right) dt\,d\sigma(q) \,.
	$$
	Here we have interchanged the $s$ and $t$ integrals and we have extended $u$ by zero to a function on all of $\Hei^N$. We will bound the inner-most integral from below for any $q\in\omega^\bot$ and $t\in\R$.
	
	So let $\omega\in\Sph^{2N-1}$, $q\in\omega^\bot$ and $t\in\R$ be fixed and consider the curve
	$$
	\gamma(s) := q\circ(s\omega,0) = (q+s\omega,t+\frac s2 q\cdot J\omega) 
	\qquad\text{for}\ s\in\R \,.
	$$
	A short computation shows that
	$$
	\omega\cdot(Zu)(q+s\omega,t+\frac s2 q\cdot J\omega) = (u\circ\gamma)'(s) \,.
	$$
	(Here on the right, of course, the notation $\circ$ means the composition of functions.) Thus, the one-dimensional Hardy inequality implies that
	$$
	\int_\R |\omega\cdot(Zu)(q+s\omega,t+\frac s2 q\cdot J\omega)|^2 \,ds
	= \int_\R |(u\circ\gamma)'(s)|^2\,ds \geq \frac14 \int_\R \frac{|(u\circ\gamma)(s)|^2}{\dist(s,\R\setminus\Omega_{\omega,q,t})^2}\,ds
	$$
	with
	$$
	\Omega_{\omega,q,t} := \{ s'\in\R:\ (q+s'\omega,t+\frac {s'} 2 q\cdot J\omega) \in\Omega \} \,.
	$$
	We observe that
	\begin{align*}
		\dist(s,\R\setminus\Omega_{\omega,q,t}) & = \inf\{ |s-s'| :\ (q+s'\omega,t+\frac {s'} 2 q\cdot J\omega) \not\in\Omega \} \\
		& = \inf\{ |\sigma| :\ (q+(s+\sigma)\omega,t+\frac {s+\sigma} 2 q\cdot J\omega) \not\in\Omega \} \\
		& = \inf\{ |\sigma| :\ (q+s\omega,t+\frac {s} 2 q\cdot J\omega)\circ (\sigma\omega,0) \not\in\Omega \} \\
		& = \tilde\delta_\omega(q+s\omega,t+\frac {s} 2 q\cdot J\omega) \,.
	\end{align*}
	Thus, we have shown that
	$$
	\int_\R |\omega\cdot(Zu)(q+s\omega,t+\frac s2 q\cdot J\omega)|^2 \,ds
	\geq \frac14 \int_\R \frac{|u(q+s\omega,t+\frac s2 q\cdot J\omega)|^2}{\tilde\delta_\omega(q+s\omega,t+\frac {s} 2 q\cdot J\omega)^2}\,ds \,.
	$$
	This inequality holds for all $\omega$, $q$ and $t$. Integrating it with respect to $t$ and $q$ and using the above change of variables formula, we find
	$$
	\int_\Omega |\omega\cdot (Zu)(p)|^2 \,dp \geq \frac14 \int_\Omega \frac{|u(p)|^2}{\tilde\delta_\omega(p)^2}\,dp \,.
	$$
	This is the desired lower bound for fixed $\omega\in\Sph^{2N-1}$. Inserting it in the formula at the beginning of the proof, gives the asserted inequality in the proposition.	
\end{proof}

\begin{proof}[Proof of Theorem \ref{mainheisen}]
	In view of Proposition \ref{hardyheisenberg}, we can proceed as in the proofs of Theorems \ref{main} and \ref{mainPolyharmonic} and so it suffices to prove the analogue of Lemma \ref{lemma} in the Heisenberg setting. More precisely, we shall set
	$$
	\tilde\psi_r(p) := \frac{\sigma(\Omega\cap B_r^{\rm H}(p))}{\sigma(B_r^{\rm H}(p))}
	\qquad\text{for all}\ p\in\Omega \,,\ r>0 \,,
	$$
	and prove for each $p\in\Omega$ and $r>0$
	\begin{equation}
		\label{eq:lemmaheisen1}
		 |\Sph^{2N-1}|^{-1} \int_{\Sph^{2N-1}} \tilde\delta_\omega(p)^{-2}\,d\omega \geq \frac1{r^2} \left( (\tilde\psi_r(p))^{-1/N}- 1 \right)
	\end{equation}
	and
	\begin{equation}
		\label{eq:lemmaheisen2}
		|\Sph^{2N-1}|^{-1} \int_{\Sph^{2N-1}} \tilde\delta_\omega(p)^{-2}\,d\omega \geq \frac{(N+1)^{(N+1)/N}}{N} \ \frac1{r^2} \left( 1-\tilde\psi_r(p) \right).
	\end{equation}
	
	The proof is similar to that of Lemma \ref{lemma}, but we indicate how to deal with some differences. Parametrizing points in the hyperplane $H(p)$ by $p\circ(z',0)$, we see that $d\sigma$ becomes $c dz'$ for a constant $c>0$. (In fact, $c=\sqrt{1+\frac14 |z|^2}$ where $p=(z,t)$, but the value of this normalization constant plays no role, since we look at a quotient of $\sigma$-measures.)
	
	Thus, by introducing spherical coordinates in $\R^{2N}$,
	\begin{align*}
		\sigma(\Omega\cap B^{\rm H}_r(p)) & = c \int_{\Sph^{2N-1}} \int_0^r \1_\Omega(p\circ(\rho\omega,0)) \, \rho^{2N-1}\,d\rho\,d\omega \\
		& \geq c \int_{\Sph^{2N-1}} \int_0^{\min\{r,\tilde\delta_\omega(p)\}} \rho^{2N-1}\,d\rho\,d\omega \\
		& = c (2N)^{-1} \int_{\Sph^{2N-1}} \min\{r^{2N}, \tilde\delta_\omega(p)^{2N} \} \,d\omega \,.
	\end{align*}
	Here we used the fact that $p\circ(\rho\omega,0)\in\Omega$ for all $\rho<\tilde\delta_\omega(p)$. Similarly, we find
	$$
	\sigma(B^{\rm H}_r(p)) = c (2N)^{-1} \int_{\Sph^{2N-1}} r^{2N} \,d\omega \,,
	$$
	so that
	\begin{equation}
		\label{eq:lemmaheisenproof}
		1- \tilde\psi_r(p) = 
		1 - \frac{\sigma(\Omega\cap B^{\rm H}_r(p))}{\sigma(B^{\rm H}_r(p))} \leq |\Sph^{2N-1}|^{-1} \int_{\Sph^{2N-1}} \left( 1 - \left( \tilde\delta_\omega(p)/r \right)^{2N} \right)_+ d\omega \,.
	\end{equation}
	Using the elementary inequality \eqref{eq:elementary} with $X=(\tilde\delta_\omega(p)/r)^{2N}$ and $\beta = 1/N$, we arrive at \eqref{eq:lemmaheisen2}.
	
	To prove \eqref{eq:lemmaheisen1}, we write the bound \eqref{eq:lemmaheisenproof} as
	$$
	\tilde\psi_r(p) \geq 1- 2N \int_0^1 \tilde s(1/\sigma) \, \sigma^{2N-1}\,d\sigma = 1- 2N \int_1^\infty \tilde s(t) \, t^{-2N-1}\,dt
	$$
	with
	$$
	\tilde s(t) := |\Sph^{2N-1}|^{-1} |\{ \omega\in\Sph^{2N-1} :\ r/\tilde\delta_\omega(p)\geq t \}| \,.
	$$
	Meanwhile, by the layer cake formula,
	$$
	r^2 |\Sph^{2N-1}|^{-1} \int_{\Sph^{2N-1}} \frac{d\omega}{\tilde\delta_\omega(p)^2} = \int_0^\infty \tilde s(\tau^{1/2})\,d\tau = 2 \int_0^\infty \tilde s(t) \, t\,dt \,.
	$$
	In view of these formulas, the claimed bound \eqref{eq:lemmaheisen1} follows from \eqref{eq:functionineqality} with $\alpha=2$ and $d=2N$.
\end{proof}


\appendix

\section{Further remarks on Robin eigenvalues}

The following results complement those of Kovařík in \cite{Kov14}.

\begin{proposition}\label{appendix1}
	Let $\Omega\subset\R^d$ be an open set with $C^2$ boundary and with nonnegative mean curvature. Then for all $u\in H^1(\Omega)$ and all $\sigma>0$,
	$$
	\int_\Omega |\nabla u(x)|^2 \,dx + \sigma \int_{\partial\Omega} |u(y)|^2\,dS(y) \geq \frac14 \frac{\sigma}{R_\Omega\,(1+\sigma R_\Omega)} \int_\Omega |u(x)|^2\,dx \,.
	$$
\end{proposition}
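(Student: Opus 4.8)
The plan is to deduce the asserted Hardy inequality from the existence of a positive \emph{supersolution} adapted to the Robin problem, via the ground state substitution. Writing $d(x):=\dist(x,\partial\Omega)$ for the distance to the boundary and taking a trial weight of the form $\phi=G(d)$ with $G>0$, the substitution $u=\phi v$ gives, for real $u\in H^1(\Omega)$,
$$
\int_\Omega |\nabla u|^2\,dx = \int_\Omega \phi^2 |\nabla(u/\phi)|^2\,dx - \int_\Omega \frac{\Delta\phi}{\phi}\,|u|^2\,dx + \int_{\partial\Omega} \frac{\partial_\nu\phi}{\phi}\,|u|^2\,dS \,,
$$
so that, dropping the first (nonnegative) term and adding $\sigma\int_{\partial\Omega}|u|^2\,dS$,
$$
\int_\Omega |\nabla u|^2\,dx + \sigma\int_{\partial\Omega}|u|^2\,dS \geq \int_\Omega \frac{-\Delta\phi}{\phi}\,|u|^2\,dx + \int_{\partial\Omega}\Big( \frac{\partial_\nu\phi}{\phi}+\sigma\Big)|u|^2\,dS \,.
$$
It therefore suffices to produce $\phi=G(d)>0$ with (i) $\partial_\nu\phi+\sigma\phi\geq 0$ on $\partial\Omega$, so that the boundary term may be discarded, and (ii) $-\Delta\phi\geq c\,\phi$ in $\Omega$ with $c=\tfrac14\,\sigma/(R_\Omega(1+\sigma R_\Omega))$. (If $R_\Omega=\infty$ the claim is vacuous, so I assume $R_\Omega<\infty$.)

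The geometric input is the behaviour of $d$. One has $|\nabla d|=1$ a.e., and the crucial consequence of the curvature hypothesis is that $\Delta d\leq 0$ in $\mathcal{D}'(\Omega)$. To see this I would note that wherever $d$ is smooth one has $\Delta d(x)=-\sum_i \kappa_i/(1-d(x)\kappa_i)$, where $\kappa_i$ are the principal curvatures of $\partial\Omega$ at the nearest boundary point in the convexity sign convention. The function $g(t):=\sum_i \kappa_i/(1-t\kappa_i)$ satisfies $g'(t)=\sum_i \kappa_i^2/(1-t\kappa_i)^2\geq 0$ and $g(0)=\sum_i\kappa_i\geq 0$ by mean-convexity, hence $g\geq 0$ and $\Delta d\leq 0$ on the open full-measure set where $d$ is smooth; across the cut locus $d$ is a minimum of smooth functions, so its distributional Laplacian only acquires an additional nonpositive singular part. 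Thus $-\Delta d$ is a nonnegative Radon measure on $\Omega$, and $d\leq R_\Omega$ throughout $\Omega$ by definition of the inradius.

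For the supersolution I would take the power weight $\phi=(B+d)^A$ with $A:=\sigma R_\Omega/(1+2\sigma R_\Omega)\in(0,1)$ and $B:=R_\Omega/(1+2\sigma R_\Omega)=A/\sigma$. Then $G'(0)/G(0)=A/B=\sigma$ yields $\partial_\nu\phi+\sigma\phi=0$ on $\partial\Omega$ (recall $\nabla d=-\nu$ there), settling (i). For (ii), since $G'=A(B+d)^{A-1}\geq 0$ and $\Delta d\leq 0$,
$$
-\Delta\phi = -G''(d) - G'(d)\,\Delta d \geq -G''(d) = A(1-A)(B+d)^{A-2} = A(1-A)(B+d)^{-2}\,\phi \,.
$$
Because $d\leq R_\Omega$, the factor $A(1-A)(B+d)^{-2}$ is bounded below by its value at $d=R_\Omega$, namely $A(1-A)/(B+R_\Omega)^2=\sigma^2 A(1-A)/(A+\sigma R_\Omega)^2$; a one-variable optimization shows that the chosen $A$ is exactly the maximizer of this expression and that its value is $\tfrac14\,\sigma/(R_\Omega(1+\sigma R_\Omega))$, which is how the clean constant arises.

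The main obstacle is making the integration by parts rigorous, since $\Delta d$ is only a measure and $u\in H^1(\Omega)$ need not vanish on $\partial\Omega$. Writing the cross term as $\int_\Omega h(d)\,\nabla(|u|^2)\cdot\nabla d\,dx$ with $h:=G'/G=A/(B+d)\geq 0$ and integrating by parts, one meets the term $\langle h(d)(-\Delta d),|u|^2\rangle$, the pairing of the nonnegative measure $-\Delta d$ with the nonnegative density $h(d)|u|^2$. The decisive point is that this term has a favourable sign and may simply be discarded, so that only the absolutely continuous contribution $-\int_\Omega h'(d)|u|^2\,dx$ is retained; together with the completion-of-square remainder $-\int_\Omega h(d)^2|u|^2\,dx$ and the boundary identity $h(0)=\sigma$ this reproduces exactly $\int_\Omega(-h'-h^2)|u|^2\,dx\geq c\int_\Omega|u|^2\,dx$. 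To justify discarding a singular measure paired against an $L^1$ density, I would first prove the inequality for $u$ smooth up to $\overline\Omega$, where the pairing is classical and the favourable sign is transparent on $\Omega\setminus\Sigma$ with $\Sigma$ the (measure-zero) cut locus, and then pass to general $u\in H^1(\Omega)$ by density together with the boundedness of the trace operator.
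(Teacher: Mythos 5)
Your proposal is correct and coincides in essence with the paper's own proof: the paper follows \cite[Lemma 4.3]{Kov14}, which is exactly your ground-state/supersolution computation with the weight $\phi=(\delta+\alpha)^{\sigma\alpha}$ (your $(B+d)^A$ with $B=\alpha$ and $A=\sigma\alpha$), notes that this argument uses only the distributional inequality $\Delta\delta\leq 0$, valid for mean-convex $C^2$ sets by \cite{LeLiLi} (the fact you re-derive via the principal-curvature formula and semiconcavity), and then, as in \cite[Theorem 4.4]{Kov14}, bounds $\delta\leq R_\Omega$ and chooses $\alpha=R_\Omega/(1+2\sigma R_\Omega)$ --- precisely your choice of $B$ and your one-variable optimization, yielding the same constant $\tfrac14\,\sigma/(R_\Omega(1+\sigma R_\Omega))$. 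The only difference is that you unpack the two cited ingredients into a self-contained argument rather than invoking them.
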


\begin{proof}
	We follow the proof of \cite[Lemma 4.3]{Kov14} and obtain, for each $\alpha>0$,
	$$
	\int_\Omega |\nabla u(x)|^2 \,dx + \sigma \int_{\partial\Omega} |u(x)|^2\,dS(x) \geq \alpha\sigma(1-\alpha\sigma) \int_\Omega \frac{|u(x)|^2}{(\delta(x)+\alpha)^2}\,dx
	$$
	where $\delta(x):=\dist(x,\R^d\setminus\Omega)$. Indeed, the proof of this inequality only uses the fact that $\Delta\delta\leq 0$ in the sense of distributions, which is valid under our assumptions on $\Omega$; see \cite{LeLiLi} and the references therein.
	
	Now we proceed as in \cite[Theorem 4.4]{Kov14}, bounding $\delta\leq R_\Omega$ and choosing $\alpha = R_\Omega/(1+2\sigma R_\Omega)$. This gives the claimed inequality.
\end{proof}

\begin{proposition}\label{appendix2}
	Let $\Omega\subset\R^d$ be a convex open set. Then for all $u\in H^1(\Omega)$ and all $\sigma>0$,
	$$
	\int_\Omega |\nabla u(x)|^2 \,dx + \sigma \int_{\partial\Omega} |u(y)|^2\,dS(y) \geq \frac{2\sigma^2}{(1+2 \sigma R_\Omega)^2} \int_\Omega |u(x)|^2\,dx \,.
	$$
\end{proposition}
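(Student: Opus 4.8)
The plan is to run the ground-state (vector-field) substitution that underlies the proof of Proposition \ref{appendix1}, but with a trigonometric profile in place of the power-law profile $\kappa/(\delta+\beta)$. Write $\delta(x):=\dist(x,\R^d\setminus\Omega)$. Since $\Omega$ is convex, $\delta$ is concave, so $|\nabla\delta|=1$ a.e.\ and $\Delta\delta\le 0$ in the sense of distributions, and moreover $\delta\le R_\Omega$ throughout $\Omega$. For a nonnegative $C^1$ profile $f$ on $[0,R_\Omega]$ I would use the field $V:=-f(\delta)\nabla\delta$: expanding $\int_\Omega|\nabla u+Vu|^2\ge 0$ and integrating by parts, one gets
\[
Q[u]:=\int_\Omega|\nabla u|^2\,dx+\sigma\int_{\partial\Omega}|u|^2\,dS\geq\int_\Omega\bigl(-f'(\delta)-f(\delta)^2\bigr)|u|^2\,dx+\int_{\partial\Omega}\bigl(\sigma-f(0)\bigr)|u|^2\,dS\,,
\]
where I used $\operatorname{div}V=-f'(\delta)-f(\delta)\Delta\delta$ together with $f\geq0$ and $\Delta\delta\le0$ to drop the curvature term, and $V\cdot\nu=f(0)$ on $\partial\Omega$. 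This is precisely the Kovařík-type inequality from Proposition \ref{appendix1} for a general profile, and its rigorous justification for merely Lipschitz $\delta$ is identical: one regularizes and uses $\Delta\delta\le0$ distributionally, exactly as in \cite{Kov14} and \cite{LeLiLi}.

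Next I would choose the profile so that the \emph{interior} Hardy weight is constant. Let $\theta\in(0,\pi/2)$ be the unique solution of $\theta\tan\theta=\sigma R_\Omega$, set $c:=\theta^2/R_\Omega^2$, and take $f(\delta):=\sqrt c\,\tan\bigl(\sqrt c\,(R_\Omega-\delta)\bigr)$. Then $f\geq0$ and decreasing on $[0,R_\Omega]$, with $f(R_\Omega)=0$ and $f(0)=\sqrt c\,\tan\theta=\sigma$, so that the boundary term vanishes; and a one-line computation gives $-f'-f^2\equiv c$. Hence $Q[u]\geq c\int_\Omega|u|^2\,dx$. This intermediate bound is the sharp one for the slab of half-width $R_\Omega$, and it reduces to the Hersch--Protter value $\pi^2/(4R_\Omega^2)$ as $\sigma\to\infty$ (that is, $\theta\to\pi/2$), which is the expected Dirichlet limit.

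Finally I would pass from the transcendental constant $c=\theta^2/R_\Omega^2$ to the stated clean bound. Writing $b:=\sigma R_\Omega=\theta\tan\theta$, the inequality $c\geq 2\sigma^2/(1+2\sigma R_\Omega)^2$ is equivalent to $\theta\geq \sqrt2\,b/(1+2b)$, i.e.\ to $1+(2\theta-\sqrt2)\tan\theta\geq0$. This is immediate for $\theta\geq 1/\sqrt2$, and for $\theta<1/\sqrt2<\pi/4$ it follows from the elementary estimate $\tan\theta\leq\frac4\pi\theta$ on $(0,\pi/4)$ together with $\max_{\theta}\theta(\sqrt2-2\theta)=\tfrac14$, which yields $(\sqrt2-2\theta)\tan\theta\leq \tfrac4\pi\cdot\tfrac14=\tfrac1\pi<1$. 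Squaring gives the assertion.

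The main obstacle is essentially bookkeeping rather than a genuine difficulty: making the vector-field identity rigorous with only a distributional sign on $\Delta\delta$ (handled exactly as in Proposition \ref{appendix1}), and checking the final elementary inequality. Everything else is forced once one decides to make the interior Hardy weight constant, and the only genuinely convex input is the concavity of $\delta$ (hence $\Delta\delta\le0$ and $\delta\le R_\Omega$).
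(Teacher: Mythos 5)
Your proof is correct, but it takes a genuinely different route from the paper's. The paper's proof is a two-line specialization of the Kovařík--Laptev Hardy inequality from \cite{KL12}: for convex $\Omega$ that inequality already provides the two interior terms $\frac14\bigl(\delta(x)+\frac1{2\sigma}\bigr)^{-2}$ and $\frac14\bigl(R_\Omega+\frac1{2\sigma}\bigr)^{-2}$ plus a positive boundary term, so dropping the boundary term and bounding $\delta\leq R_\Omega$ in the first term gives $\frac12\bigl(R_\Omega+\frac1{2\sigma}\bigr)^{-2}=\frac{2\sigma^2}{(1+2\sigma R_\Omega)^2}$ at once. You instead run the ground-state substitution from scratch with the tangent profile $f(\delta)=\sqrt c\,\tan\bigl(\sqrt c\,(R_\Omega-\delta)\bigr)$, $c=\theta^2/R_\Omega^2$, $\theta\tan\theta=\sigma R_\Omega$; your algebra checks out ($-f'-f^2\equiv c$, $f(0)=\sigma$, and $f\geq0$ since the argument of the tangent stays in $[0,\theta]\subset[0,\pi/2)$), and so does the concluding elementary inequality $1+(2\theta-\sqrt2)\tan\theta\geq0$ (convexity of $\tan$ on $[0,\pi/4]$ gives the chord bound $\tan\theta\leq\frac4\pi\theta$ there, and $\theta(\sqrt2-2\theta)\leq\frac14$). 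What your route buys is a strictly stronger intermediate result, $\lambda_\Omega^{{\rm R}\sigma}\geq\theta^2/R_\Omega^2$ with $\theta\tan\theta=\sigma R_\Omega$, which is sharp for the slab of half-width $R_\Omega$ and recovers the Hersch--Protter constant $\pi^2/(4R_\Omega^2)$ as $\sigma\to\infty$, with the stated clean bound following as a corollary. What the paper's route buys is brevity and the absence of any new analysis: the delicate points (distributional treatment of $\Delta\delta\leq0$, traces, density of smooth functions in $H^1(\Omega)$) are entirely delegated to \cite{KL12}, whereas in your argument the rigorous justification of the substitution identity for general $u\in H^1(\Omega)$ on a possibly unbounded, non-smooth convex set is precisely the step requiring care---though, as you note, it is the same regularization already invoked for Proposition \ref{appendix1}. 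One small remark applying to both proofs: they implicitly assume $R_\Omega<\infty$; when $R_\Omega=\infty$ the asserted inequality is trivial since the form is nonnegative.
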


When $\sigma R_\Omega$ is small, this bound is inferior to the bound in Proposition \ref{appendix1}, but when it is large, the bound in Proposition \ref{appendix2} is superior by about a factor of 2.

\begin{proof}
	We know from \cite{KL12} that
	\begin{align*}
		\int_{\Omega}|\nabla u(x)|^2 \,d x+ \sigma \int_{\partial \Omega} |u(y)|^2 d S(y)  
		& \geq \frac{1}{4} \int_{\Omega}\left(\delta(x)+\frac{1}{2 \sigma}\right)^{-2}|u(x)|^2 \,d x \nonumber \\
		& \quad +\frac{1}{4} \int_{\Omega}\left(R_{\Omega}+\frac{1}{2 \sigma}\right)^{-2}|u(x)|^2 \,d x  \nonumber  \\
		& \quad +\frac{1}{2} \int_{\partial \Omega}\left(R_{\Omega}+\frac{1}{2 \sigma}\right)^{-1}|u(y)|^2 \,d S(y) \,.
	\end{align*}
	We drop the last term on the right side and bound $\delta\leq R_\Omega$ in the first term. This yields the claimed bound.
\end{proof}


\end{document}